\documentclass[11pt]{article}

\usepackage{amsfonts,amsmath,amsthm,latexsym,color,epsfig,hyperref,enumitem,calc}
\setlength{\textheight}{22.5cm} \setlength{\textwidth}{6.7in}
\setlength{\topmargin}{0pt} \setlength{\evensidemargin}{1pt}
\setlength{\oddsidemargin}{1pt} \setlength{\headsep}{10pt}
\setlength{\parskip}{1mm} \setlength{\parindent}{0mm}

\newtheorem{theorem}{Theorem}[section]
\newtheorem*{theorem*}{Theorem}
\newtheorem{lemma}{Lemma}[section]
\newtheorem{proposition}{Proposition}[section]
\newtheorem{conjecture}{Conjecture}[section]
\newtheorem{corollary}{Corollary}[section]

\def\qed{\ifvmode\mbox{ }\else\unskip\fi\hskip 1em plus 10fill$\Box$}

\input{epsf}

\makeatletter
\def\Ddots{\mathinner{\mkern1mu\raise\p@
\vbox{\kern7\p@\hbox{.}}\mkern2mu
\raise4\p@\hbox{.}\mkern2mu\raise7\p@\hbox{.}\mkern1mu}}
\makeatother

\title{\vspace{-0.7cm}Hereditary quasirandomness without regularity}
\author{David Conlon\thanks{Mathematical Institute, Oxford OX2 6GG,
United Kingdom. Email: {\tt david.conlon@maths.ox.ac.uk}. Research
supported by a Royal Society University Research Fellowship and by ERC Starting Grant 676632.}\and
Jacob Fox\thanks{Department of Mathematics, Stanford University, Stanford, CA 94305. Email: {\tt jacobfox@stanford.edu}. Research supported by a Packard Fellowship, by NSF Career Award DMS-1352121 and by an Alfred P. Sloan Fellowship.}
\and
Benny Sudakov\thanks{Department of Mathematics, ETH, 8092 Zurich, Switzerland.
Email: {\tt benjamin.sudakov@math.ethz.ch}. Research supported by SNSF grant 200021-149111.}}
\date{}

\begin{document}
\maketitle

\begin{abstract}
A result of Simonovits and S\'os states that for any fixed graph $H$ and any $\epsilon > 0$ there exists $\delta > 0$ such that if $G$ is an $n$-vertex graph with the property that every $S \subseteq V(G)$ contains $p^{e(H)} |S|^{v(H)} \pm \delta n^{v(H)}$ labeled copies of $H$, then $G$ is quasirandom in the sense that every $S \subseteq V(G)$ contains $\frac{1}{2} p |S|^2 \pm \epsilon n^2$ edges. The original proof of this result makes heavy use of the regularity lemma, resulting in a bound on $\delta^{-1}$ which is a tower of twos of height polynomial in $\epsilon^{-1}$. We give an alternative proof of this theorem which avoids the regularity lemma and shows that $\delta$ may be taken to be linear in $\epsilon$ when $H$ is a clique and polynomial in $\epsilon$ for general $H$. This answers a problem raised by Simonovits and S\'os.
\end{abstract}

\section{Introduction}

What does it mean to say that a graph is random-like and how does one construct such graphs? Attempts to answer these questions have played a central role in mathematics over the last forty years, with connections to combinatorics, probability, number theory, theoretical computer science and more (see, for example, \cite{HLW06, KS06}).

For dense graphs, there is a somewhat surprising answer to the first question, in that many of the possible definitions for what it means to be random-like turn out to be equivalent, a fact first observed by Chung, Graham and Wilson~\cite{CGW}, building on earlier work of Thomason~\cite{Th, Th2}. To be more precise, let $H$ be a fixed graph with $r$ vertices and $m$ edges, $0 < p < 1$ a fixed constant and $G$ an $n$-vertex graph. Consider the following properties that $G$ might have:

\begin{enumerate}[leftmargin=1.8cm]

\item[$\mathcal{P}_{H,p}(\epsilon)$:] 
The number of labeled copies of $H$ in $G$ is within $\epsilon n^r$ of $p^m n^r$.

\item[$\mathcal{P}_{H,p}^*(\epsilon)$:] 
For every subset $S \subseteq V(G)$, the number of labeled copies of $H$ in the induced subgraph $G[S]$
is within $\epsilon n^r$ of $p^m |S|^r$.

\end{enumerate}

The property $\mathcal{P}_{H,p}$ asks that the number of copies of $H$ in $G$ is close to what one would expect in the binomial random graph $G(n,p)$, while the hereditary property $\mathcal{P}^*_{H,p}$ asks for a more robust version of this condition, saying that the copies of $H$ are also uniformly distributed in $G$, again just as one would expect in $G(n,p)$. 

A sequence of graphs $(G_n)_{n=1}^\infty$ with $|G_n| = n$ is said to be {\it $p$-quasirandom} if it satisfies the property $\mathcal{P}_{2, p}^*(\epsilon)$ with $\epsilon = o(1)$, where, here and throughout the paper, we write $\mathcal{P}_{r,p}$ and $\mathcal{P}^*_{r,p}$ for $\mathcal{P}_{H,p}$ and $\mathcal{P}^*_{H,p}$ when $H$ is the complete graph $K_r$. That is, a sequence of graphs is $p$-quasirandom if the density of $G_n$ is asymptotic to $p$ and the edges are uniformly distributed across subsets. Of the equivalent formulations discovered by Chung, Graham and Wilson~\cite{CGW}, the most striking is perhaps the following.

\begin{theorem*}[Chung--Graham--Wilson]
For any $\epsilon > 0$, there exists $\delta > 0$ such that
\begin{enumerate}
\item[(a)]
 if a graph satisfies $\mathcal{P}_{2,p}(\delta)$ and $\mathcal{P}_{C_4,p}(\delta)$, then it also satisfies $\mathcal{P}_{2, p}^*(\epsilon)$;
\item[(b)]
if a graph satisfies $\mathcal{P}_{2, p}^*(\delta)$, then it also satisfies $\mathcal{P}_{2,p}(\epsilon)$ and $\mathcal{P}_{C_4,p}(\epsilon)$.
\end{enumerate}
\end{theorem*}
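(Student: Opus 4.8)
The plan is to prove both directions by elementary counting together with a little linear algebra, working throughout with the symmetric matrix $B = A - pJ$, where $A$ is the adjacency matrix of $G$ and $J$ is the all-ones matrix. The key point is that $\mathbf{1}_S^{\top} B \mathbf{1}_S = 2e(G[S]) - p|S|^2$ for every $S \subseteq V(G)$ (using that $A$ has zero diagonal), so $\mathcal{P}^*_{2,p}$ is essentially a bound on $\max_S |\mathbf{1}_S^{\top} B \mathbf{1}_S|$, while $\mathcal{P}_{2,p}(\delta)$ and $\mathcal{P}_{C_4,p}(\delta)$ say that $\mathrm{tr}(A^2) = 2e(G)$ and $\mathrm{tr}(A^4) = \sum_{u,v} c(u,v)^2$ are within lower-order terms of $pn^2$ and $p^4 n^4$, where $c(u,v) = |N(u)\cap N(v)|$ is the codegree (with the convention $c(v,v) = d(v)$); here I use that the number of labelled copies of $C_4$ differs from $\mathrm{tr}(A^4)$ by only $O(n^3)$.

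For part (b), applying $\mathcal{P}^*_{2,p}(\delta)$ with $S = V(G)$ gives $\mathcal{P}_{2,p}(\delta)$ at once, so it remains to show $\sum_{u,v} c(u,v)^2 = p^4 n^4 + O(\delta n^4)$. I would establish this by a two-step bootstrap. First, splitting $V(G)$ according to whether $d(v) \ge pn$ and applying $\mathcal{P}^*_{2,p}(\delta)$ — combined with inclusion--exclusion, which converts control of $e(S)$ into control of $e(X,Y)$ — to each part shows $\sum_v |d(v)-pn| = O(\delta n^2)$, and the trivial bound $|d(v)-pn| \le n$ upgrades this to $\sum_v (d(v)-pn)^2 = O(\delta n^3)$. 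Second, the identical argument applied, for a fixed vertex $a$, to the numbers $|N(v)\cap N(a)| - p\,d(a)$ gives $\sum_v \big(|N(v)\cap N(a)| - p\,d(a)\big)^2 = O(\delta n^3)$ uniformly in $a$, and hence $\sum_v |N(v)\cap N(a)|^2 = p^2 n\, d(a)^2 + O(\delta n^3)$. Summing over $a$ and using the first step to replace $\sum_a d(a)^2$ by $p^2 n^3 + O(\delta n^3)$ yields $\sum_{u,v} c(u,v)^2 = p^4 n^4 + O(\delta n^4)$; this direction gives $\delta$ linear in $\epsilon$.

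For part (a), I would first extract concentration from the two count hypotheses. Cauchy--Schwarz gives $\mathrm{tr}(A^4) = \sum_{u,v} c(u,v)^2 \ge \big(\sum_{u,v} c(u,v)\big)^2/n^2 = \big(\sum_v d(v)^2\big)^2/n^2$, so the upper bound from $\mathcal{P}_{C_4,p}(\delta)$ forces $\sum_v d(v)^2 \le p^2 n^3(1+O(\delta))$; together with $\sum_v d(v) = 2e(G) = pn^2 \pm \delta n^2$ this yields $\sum_v (d(v)-pn)^2 = O(\delta n^3)$, i.e. $\|e\|^2 = O(\delta n^3)$ where $e = d - pn\mathbf{1}$. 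The same two identities also give $\|A^2 - p^2 n J\|_F^2 = \mathrm{tr}(A^4) - 2p^2 n \sum_v d(v)^2 + p^4 n^4 = O(\delta n^4)$. Now the identity $B^2 = (A^2 - p^2 nJ) - p(e\mathbf{1}^{\top} + \mathbf{1}e^{\top})$ and the triangle inequality for the Frobenius norm give $\mathrm{tr}(B^4) = \|B^2\|_F^2 = O(\delta n^4)$, since $\|e\mathbf{1}^{\top} + \mathbf{1}e^{\top}\|_F^2 = 2n\|e\|^2 + 2(\mathbf{1}^{\top} e)^2 = O(\delta n^4)$. Hence $\|B\|_{\mathrm{op}} \le (\mathrm{tr}\, B^4)^{1/4} = O(\delta^{1/4}n)$, so $|2e(G[S]) - p|S|^2| = |\mathbf{1}_S^{\top} B \mathbf{1}_S| \le \|B\|_{\mathrm{op}}\,|S| = O(\delta^{1/4}n^2)$ for every $S$, which is at most $\epsilon n^2$ as soon as $\delta$ is a suitable constant times $\epsilon^4$; this is $\mathcal{P}^*_{2,p}(\epsilon)$.

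The step I expect to be the real bottleneck is the last one in part (a): the fourth-moment estimate $\mathrm{tr}(B^4) = O(\delta n^4)$ is clean, but passing from it to a uniform discrepancy bound via the operator norm costs a fourth root, so this approach only yields $\delta$ polynomial (roughly $\epsilon^4$) rather than linear. A recurring technical nuisance is that $G$ need not be regular, so one cannot simply project onto the top eigenvector of $A$; this is precisely why it pays to work with $B = A - pJ$, which absorbs the principal eigenspace automatically once degrees are shown to be concentrated, and why the degree-concentration step must precede the fourth-moment computation. Smaller points of care are the overlapping-sets issue in the bootstrap of part (b) when turning $e(S)$-discrepancy into $e(X,Y)$-discrepancy, and, in both directions, absorbing the $O(n^3)$ gap between $\mathrm{tr}(A^4)$ and the number of labelled copies of $C_4$.
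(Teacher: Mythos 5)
The statement is the Chung--Graham--Wilson theorem, which the paper cites from \cite{CGW} as known background and does not itself prove, so there is no in-paper argument to compare against; I therefore assess your proof on its own merits. It is correct. In part~(a), the chain of deductions works as you describe: $\mathcal{P}_{C_4,p}(\delta)$ controls $\mathrm{tr}(A^4)$ up to the $O(n^3)$ correction from degenerate closed $4$-walks; the Cauchy--Schwarz inequality $\mathrm{tr}(A^4)\ge\bigl(\sum_v d(v)^2\bigr)^2/n^2$ together with the edge count (the latter gives the needed lower bound $\sum_v d(v)^2\ge(2e(G))^2/n$) yields $\sum_v(d(v)-pn)^2=O(\delta n^3)$ and $\|A^2-p^2nJ\|_F^2=O(\delta n^4)$; the identity $B^2=(A^2-p^2nJ)-p(e\mathbf{1}^{\top}+\mathbf{1}e^{\top})$ and the Frobenius triangle inequality then give $\mathrm{tr}(B^4)=\|B^2\|_F^2=O(\delta n^4)$; and since $B$ is symmetric, $|\mathbf{1}_S^{\top}B\mathbf{1}_S|\le\|B\|_{\mathrm{op}}\,|S|\le(\mathrm{tr}\,B^4)^{1/4}n$. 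You correctly flag the fourth-root step as the source of the $\delta=\Theta(\epsilon^4)$ loss. In part~(b) the bootstrap is also sound: the split on the sign of $d(v)-pn$, combined with inclusion--exclusion to pass from $e(S)$ to $e(X,Y)$ even for overlapping $X,Y$ (the same device used in the paper's Lemma~\ref{lem:counting}), gives $\sum_v|d(v)-pn|=O(\delta n^2)$ and hence $\sum_v(d(v)-pn)^2=O(\delta n^3)$; repeating this with $N(a)$ in place of $V$ gives codegree concentration uniformly in $a$, and summing over $a$ gives $\mathrm{tr}(A^4)=p^4n^4+O(\delta n^4)$, hence $\mathcal{P}_{C_4,p}(O(\delta))$ after absorbing the $O(n^3)$ gap for $n$ large. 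Your accounting of the small technical points (overlapping sets, zero diagonal of $A$ in the identity $\mathbf{1}_S^{\top}B\mathbf{1}_S=2e(G[S])-p|S|^2$, and the degenerate $4$-walk correction) is careful throughout.
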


It follows from part (a) that if the number of edges and the number of cycles of length four in a sequence of graphs $(G_n)_{n=1}^\infty$ are both asymptotic to their expected values in the random graph $G(n,p)$, then the sequence is $p$-quasirandom. What is striking about this conclusion is that a purely global property, that of having certain counts of edges and cycles of length four, is sufficient to imply a local property about the distribution of edges on small subsets. Moreover, by part (b), the converse also holds.

A similar result is conjectured~\cite{CFS10, ST04} to hold when $C_4$ is replaced by any bipartite graph $H$ with at least one cycle. Known as the forcing conjecture, this conclusion is now known to hold for a wide range of bipartite graphs, with progress on the conjecture closely paralleling recent progress~\cite{CFS10, CKLL16, KLL16, LSz16, Sz16} on Sidorenko's conjecture. On the other hand, part (a) does not hold when $C_4$ is replaced by a non-bipartite $H$. To see this for triangles, consider the graph $G$ on $n$ vertices consisting of four disjoint sets $V_1, V_2, V_3$ and $V_4$, each of order $n/4$, with $V_1$ and $V_2$ complete, $V_3$ and $V_4$ empty, a complete bipartite graph between $V_3$ and $V_4$ and a random bipartite graph with edge probability $1/2$ between $V_1 \cup V_2$ and $V_3 \cup V_4$. This graph has density $1/2$ and asymptotically $n^3/8$ labeled triangles. However, it is clearly not $1/2$-quasirandom.

A key result about quasirandom graphs, proved by Simonovits and S\'os~\cite{SS97}, says that such counterexamples can be avoided if we strengthen our assumption, asking that $G$ satisfies the hereditary property $\mathcal{P}_{H,p}^*$ rather than just $\mathcal{P}_{H,p}$. That is, a sequence of graphs $(G_n)_{n=1}^\infty$ with $|G_n| = n$ is $p$-quasirandom if every subset $S \subseteq V(G_n)$ contains roughly the same number of copies of $H$ as one would expect to find in $G(n,p)$.

\begin{theorem*}[Simonovits--S\'os]
For any $\epsilon > 0$, there exists $\delta > 0$ such that if a graph satisfies $\mathcal{P}_{H,p}^*(\delta)$, then it also satisfies $\mathcal{P}_{2, p}^*(\epsilon)$.
\end{theorem*}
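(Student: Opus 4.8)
The first move is a ``finite differences'' reformulation that replaces the condition $\mathcal P_{H,p}^*(\delta)$ by a clean statement about \emph{transversal} copies of $H$. Write $h(S)$ for the number of labeled copies of $H$ in $G[S]$, set $r=v(H)$ and $m=e(H)$, and let $T_1,\dots,T_r\subseteq V(G)$ be disjoint. By inclusion--exclusion, the alternating sum $\sum_{I\subseteq[r]}(-1)^{r-|I|}h\big(\bigcup_{i\in I}T_i\big)$ counts exactly those labeled copies of $H$ in $G\big[\bigcup_i T_i\big]$ that meet every $T_i$; since there are only $r$ vertices and the $T_i$ are disjoint, this means exactly one vertex in each part. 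On the other hand, feeding $\mathcal P_{H,p}^*(\delta)$ into each of the $2^r$ terms and using the polynomial identity $\sum_{I}(-1)^{r-|I|}\big(\sum_{i\in I}|T_i|\big)^r=r!\prod_i|T_i|$, that same sum equals $p^{m}r!\prod_i|T_i|\pm 2^r\delta n^r$. Hence $\mathcal P_{H,p}^*(\delta)$ forces the number of transversal copies of $H$ with respect to $(T_1,\dots,T_r)$ to be $p^{m}\prod_i|T_i|\pm O_H(\delta n^r)$ for \emph{all} disjoint parts; in the clique case this reads $N(T_1,\dots,T_r)=p^{\binom r2}\prod_i|T_i|\pm O_r(\delta n^r)$, where $N$ counts transversal cliques. (Conversely, averaging this over a random partition of $S$ into $r$ parts recovers $\mathcal P_{H,p}^*(O_H(\delta))$, so no information is lost.) When $r=2$ this is already the conclusion, so from now on $r\ge 3$, and the whole difficulty is to pass from this ``degree-$m$'' information to ``degree-$1$'' information about edges.

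The second ingredient is an elementary averaging principle used repeatedly: if a quantity $g$ defined on vertices (or on tuples) satisfies $\sum_{x\in X}g(x)=|X|\,t\pm\eta$ for \emph{every} admissible set $X$, then choosing $X$ to be the set on which $g$ exceeds its target $t$, and then the set on which it falls below $t$, gives $\sum_x|g(x)-t|\le 2\eta$, so that $g(x)=t\pm\sqrt\eta$ for all but a $\sqrt\eta$-fraction of $x$. Applying this to the transversal estimate with one part ranging over all subsets of the remaining vertex set, one converts the ``integrated'' bounds into pointwise ones: for instance $\sum_{c}\big|\,e(G[N(c)])-\tfrac{p^3}{2}n^2\big|=O(\delta n^3)$ in the triangle case, so almost every vertex sees the correct number of edges in its neighbourhood; more generally the common neighbourhood of almost every fixed transversal clique carries the expected count of transversal cliques one size down.

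The heart of the argument is to turn this into genuine degree and codegree control --- that $|N(v)\cap S|=p|S|\pm\epsilon n$ for all $v$ and all $S$ of size $\Omega(n)$, and that all but a small exceptional set of pairs have codegree $p^2n\pm\epsilon n$ --- from which $\mathcal P_{2,p}^*(\epsilon)$ follows by summing over $S$, and quasirandomness in the full $C_4$-counting sense follows by the Chung--Graham--Wilson equivalences (all with only a polynomial loss). The plan is an induction on $r$, whose base case $r=2$ is trivial: one uses the transversal-$K_r$ estimate, together with the averaging principle, to show that the common neighbourhood of a typical small clique behaves like a graph satisfying the transversal-$K_{r-1}$ hypothesis, invokes the inductive hypothesis there, and stitches the local conclusions back together. \textbf{This stitching is the main obstacle, for two reasons.} First, the error term $O_H(\delta n^r)$ is only useful when every part has size $\Omega(n)$, so one genuinely cannot zoom into a single neighbourhood; the extraction of the ``degree into a set'' quantities, which sit strictly below the clique counts in order, must be bootstrapped out of the same transversal estimates applied inside common neighbourhoods and then pushed back through the averaging principle. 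Second, each pass through that principle costs a square root, so a naive recursion would yield only $\delta=\epsilon^{\,2^{-r}}$-type bounds; getting a true polynomial dependence (and, for cliques, a linear one) requires organising the induction so that the losses accumulate additively rather than multiplicatively --- in effect, working throughout with a single global deviation functional such as $\sum_{v,S}\big|\,|N(v)\cap S|-p|S|\,\big|$, or the $\ell^1$-deviation of the codegree matrix from $p^2nJ$, and showing directly that the transversal hypothesis forces it to be $O_H(\delta)\cdot n^{O(1)}$.

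Finally, for general $H$ the same finite-differences step yields control of transversal copies of $H$, and one either reduces to the clique case or re-runs the induction with $K_r$ replaced by $H$; the only change is that when one fixes an edge of $H$ the ``completions'' are no longer cliques but smaller $H$-dependent configurations, and accounting for their counts is what degrades the linear bound to a polynomial one. The proof concludes, as indicated above, by invoking the Chung--Graham--Wilson equivalences (or a direct codegree-variance computation) to deduce $\mathcal P_{2,p}^*(\epsilon)$ from the degree and codegree control, with $\delta$ polynomial in $\epsilon$ in general and linear in $\epsilon$ when $H$ is a clique.
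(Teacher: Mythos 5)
Your opening step --- inclusion--exclusion over the subsets of a partition $T_1,\dots,T_r$ to isolate transversal copies of $H$ --- is precisely Lemma~\ref{easy} of the paper (the clique argument in Lemma~\ref{two-sets} achieves the same end via a Vandermonde system, but the purpose is identical). After that, however, your sketch and the paper part ways, and the gap opens exactly where you yourself put the warning sign. You propose an induction on $r$, zooming into common neighbourhoods of small cliques; you call the ``stitching'' of local conclusions back into global degree and codegree control ``the main obstacle''; and the fix you offer --- to work with a global $\ell^1$-deviation functional such as $\sum_{v,S}\big|\,|N(v)\cap S|-p|S|\,\big|$ and ``show directly that the transversal hypothesis forces it to be $O_H(\delta)\,n^{O(1)}$'' --- restates the thing to be proved rather than proving it. You do not explain how to pass from transversal-count information (degree $m$ in the edge indicators) to degree information (degree $1$) without paying a square root at each stage, which you correctly flag as fatal; nor do you supply the mechanism that would keep the recursion well-posed once the common neighbourhoods have size $\ll n$, where the transversal error $O(\delta n^r)$ overwhelms the main term. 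In its present form this is a plan, not a proof.

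The paper avoids the induction entirely and instead supplies two concrete mechanisms, neither of which appears in your sketch. For cliques, the pivot is the quantity $c(u,v)$ counting $(r-1)$-sets $S$ with both $\{u\}\cup S$ and $\{v\}\cup S$ cliques: the symmetry $c(u,v)=c(v,u)$, combined with the transversal estimates, gives $\sum_{u,v}|d(u)^{r-1}-d(v)^{r-1}|=O\big(p^{-\binom r2}\delta n^{r+1}\big)$ (Lemma~\ref{degrees}), after which an elementary inequality (Corollary~\ref{inequality2}) and a random-halving argument contradict any large edge-count deviation relative to the \emph{actual} density $q$ of $G$; only then is $q$ pinned to $p$ using the global $r$-clique count. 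This two-stage structure (prove ${\cal P}^*_{2,q}$ first, identify $q=p$ second) is absent from your plan, which aims directly for $p$. For general $H$, the paper instead runs a density-increment iteration (Lemma~\ref{keylem}): a density deviation of $\alpha p$ on some pair $(A,B)$ is amplified to $(1+\beta)\alpha p$ on a sub-pair, and once $\alpha$ reaches a fixed constant a single-set regularity lemma (Lemma~\ref{singlereg}) produces an $\eta$-regular piece $C_1$ to which a small high-degree set $D_1$ is attached, giving a set $C_1\cup D_1$ with visibly too many labeled copies of $H$. Without the $c(u,v)$ device or the density-increment machinery (or some substitute you would need to invent), the central step of your argument remains unproved.
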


The original proof of this result makes heavy use of the regularity lemma~\cite{Sz76}, resulting in a bound on $\delta^{-1}$ which is a tower of twos of height polynomial in $\epsilon^{-1}$. The main aim of this paper is to give an alternative proof of this theorem which avoids the use of the regularity lemma, giving a much better bound for $\delta$ in terms of $\epsilon$. This answers a problem raised by Simonovits and S\'os in their paper. In reality, we prove two theorems, the first showing that $\delta$ may be taken to be linear in $\epsilon$ when $H$ is a clique, a result which is clearly optimal (consider the random graph with density $p + \epsilon$), and the second showing that $\delta$ may be taken to be polynomial in $\epsilon$ for general $H$.

\begin{theorem}
\label{complete-graphs}
For every natural number $r \geq 3$, there is a constant $c(r)$ such that if $p$ and $\epsilon$ are constants with $0 < \epsilon, p < 1$ and $n$ is sufficiently large depending on $r$, $p$ and $\epsilon$, then any $n$-vertex graph $G$ that satisfies ${\cal P}^*_{r,p}(\delta)$ with $\delta = c(r) p^{2r^2} \epsilon$ also satisfies ${\cal P}^*_{2,p}(\epsilon)$. 
\end{theorem}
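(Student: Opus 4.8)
The plan is to argue by contradiction: assume $G$ satisfies $\mathcal{P}^*_{r,p}(\delta)$ but not $\mathcal{P}^*_{2,p}(\epsilon)$, so some $S_0\subseteq V(G)$ has $\bigl|2e(G[S_0])-p|S_0|^2\bigr|>\epsilon n^2$, and produce a set on which the number of labelled copies of $K_r$ differs from $p^{\binom r2}|S|^r$ by more than $\delta n^r$. The first point is one of scale: the error permitted in $\mathcal{P}^*_{r,p}(\delta)$ lives at level $n^r$, whereas edge discrepancies live at level $n^2$, so the discrepancy must be detected inside a clique count on a set of size $\Theta(n)$, where it is amplified by a factor $\Theta(n^{r-2})$. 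I would therefore first prove an amplification lemma: if $\mathcal{P}^*_{2,p}(\epsilon)$ fails then there is a set $S$ with $|S|\ge c_1 n$ and $\bigl|2e(G[S])-p|S|^2\bigr|\ge c_2\epsilon n^2$. This is a purely combinatorial statement about discrepancy — one spreads the violation of $S_0$ to a constant fraction of $V(G)$ by adding or deleting a density-deficient or density-excessive region — and we may in addition take $S$ to maximise the discrepancy and, among all such, to have maximum size.

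Next, write $A=p(J-I)+B$, so $B_{uv}=A_{uv}-p$ off the diagonal and $0$ on it, and expand the number of labelled copies of $K_r$ in $G[S]$ (which differs from the stated quantity and from the falling-factorial version only by $O_r(n^{r-1})\le\tfrac12\delta n^r$):
\[
K_r(G[S])=p^{\binom r2}|S|^r+\sum_F c_F\, p^{\binom r2-e(F)}\,|S|^{\,r-v(F)}\,\hom^{\mathrm{inj}}(F,B_S),
\]
the sum running over isomorphism classes of graphs $F$ without isolated vertices, $1\le e(F)\le\binom r2$, $v(F)\le r$, where $c_F>0$ is the number of copies of $F$ in $K_r$, $B_S$ is $B$ restricted to $S$, and $\hom^{\mathrm{inj}}(F,B_S)=\sum\prod_{ij\in E(F)}B_{uv}$ over injective maps $V(F)\to S$. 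The term $F=K_2$ is the main term,
\[
\binom r2\, p^{\binom r2-1}\,|S|^{r-2}\bigl(2e(G[S])-p|S|(|S|-1)\bigr),
\]
whose absolute value, by the choice of $S$, is at least $\tfrac12\binom r2 p^{\binom r2-1}c_1^{\,r-2}c_2\,\epsilon\, n^r$, comfortably exceeding $\delta n^r=c(r)p^{2r^2}\epsilon n^r$ once $c(r)$ is small (using $\binom r2-1<2r^2$, so $p^{\binom r2-1}\ge p^{2r^2}$). Everything then comes down to showing that the remaining terms cannot cancel the main term.

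For this I would classify the correction terms by the shape of $F$. A large family of graphs is positive-semidefinite for symmetric weightings — disjoint unions of single edges, even cycles and even-degree stars (for instance $\hom^{\mathrm{inj}}(P_3,B_S)=\sum_{v\in S}(d_S(v)-p|S|)^2+O_r(n^2)\ge -O_r(n^2)$ and $\hom^{\mathrm{inj}}(2K_2,B_S)=\bigl(2e(G[S])-p|S|^2\bigr)^2+O_r(n^3)\ge -O_r(n^3)$) — so in the case, without loss of generality, that $G[S]$ is edge-dense, all of these contributions have the same sign as the main term and only help. The genuinely dangerous terms are the sign-indefinite ones: $\hom^{\mathrm{inj}}(K_3,B_S)=\mathrm{tr}(B_S^3)$, odd-length closed walks $\mathrm{tr}(B_S^{2k+1})$, and star/path-type counts such as $\sum_{v\in S}(d_S(v)-p|S|)^3$, none of which is controlled by positivity or by the trivial bound $\|B_S\|_F\le|S|$. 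Controlling these is the crux of the argument. The plan is twofold. First, use the extremality of $S$: a short argument partitioning $S$ by the sign of $d_S(v)-p|S|$ and bounding each part's induced and cross densities by the maximum discrepancy gives $\sum_{v\in S}(d_S(v)-p|S|)^2=O\bigl(|S|\cdot\max_T|2e(G[T])-p|T|^2|\bigr)$, which tames the star- and path-type terms at the cost of a few negative powers of $p$ — precisely what the factor $p^{2r^2}$ in the statement is there to absorb. Second, for the operator-norm-driven terms like $\mathrm{tr}(B_S^k)$, run a dichotomy: if $\|B_S\|_{\mathrm{op}}$ is small (say $o(p^{2r^2}\epsilon\,n)$) all such terms are negligible beside the main term; if $\|B_S\|_{\mathrm{op}}$ is large, a spectral (Grothendieck-type) rounding of the extremal eigenvector produces two sets with anomalous bipartite density, hence a new edge-discrepancy, which is played against the maximality and extremality of $S$ and iterated. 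The negative (sparse) case I would fold in with Kruskal–Katona — a set of density below $p^{r-1}$ already has too few copies of $K_r$ — leaving only densities in $(p^{r-1},p)$, where the expansion above applies verbatim. I expect the main obstacle to be exactly this last step: making the iteration on indefinite higher-order terms converge with only a linear loss in $\epsilon$; the determination of the explicit $c(r)$ and the exponent $p^{2r^2}$ should then be routine bookkeeping.
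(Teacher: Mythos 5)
Your approach — expand the clique count around the ``expected'' weight $p$, identify the $K_2$ term as linear in the edge discrepancy, and show the remaining correction terms cannot cancel it — is genuinely different from the paper's route, which never expands a clique polynomial. Instead the paper first estimates pairwise degree-power discrepancies $\sum_{u,v}|d^{r-1}(u)-d^{r-1}(v)|$ directly from $\mathcal{P}^*_{r,p}(\delta)$ (via a Vandermonde trick counting cliques by their intersection pattern with two fixed sets), then uses a random-sampling argument to manufacture two equal-size sets $X,Y$ with $|e(X)-e(Y)|=\Omega(\gamma n^2)$, and finally a simple inequality on power sums to derive a contradiction. Crucially, the paper works throughout at the \emph{actual} density $q=e(G)/\binom{n}{2}$, proving $\mathcal{P}^*_{2,q}(\gamma)$ first and only at the end using the converse counting lemma to force $q$ close to $p$. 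This two-stage structure is what lets the argument avoid exactly the sign and cancellation issues your expansion creates.

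The gap in your sketch is at the point you flag yourself: the indefinite terms. Your classification of ``safe'' $F$ (disjoint unions of edges, even cycles, even-degree stars) covers only a small subfamily of subgraphs of $K_r$ once $r\geq 4$ --- paths $P_k$ with $k\geq 4$, odd stars $K_{1,2j+1}$, odd cycles, $K_4$ minus an edge, $C_3\cup K_2$, and so on are all indefinite, and their number grows rapidly with $r$. For these you propose two mechanisms. The $L^2$ degree bound obtained from extremality of $S$ controls $\sum_v(d_S(v)-p|S|)^2$, but does not control $\sum_v(d_S(v)-p|S|)^3$ or higher odd moments without further structural input: a handful of vertices of degree $(p\pm\Theta(1))|S|$ can dominate the cube sum while contributing negligibly to the square sum. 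The spectral dichotomy on $\|B_S\|_{\mathrm{op}}$ is the right instinct but, as stated, it must be iterated and each iteration loses a polynomial factor; making this converge with only a \emph{linear} loss in $\epsilon$ is precisely the heart of the theorem, and nothing in the sketch shows how to do it. (For comparison, the fourth-moment / spectral approach of Chung--Graham--Wilson inherently costs a power of $\epsilon$.) So as written there is a genuine gap at the crux, not merely a routine verification to be filled in.

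There is a second, more concrete problem: your handling of the sparse case. You fold densities below $p^{r-1}$ into Kruskal--Katona, but that still leaves $d(G[S])\in(p^{r-1},p)$, and there you assert the ``expansion applies verbatim.'' It does not apply in the same way. When $2e(G[S])-p|S|^2<0$ the main term is negative, while every PSD contribution $\hom^{\mathrm{inj}}(F,B_S)\geq 0$ enters with a \emph{positive} coefficient $c_F p^{\binom{r}{2}-e(F)}|S|^{r-v(F)}$. So the PSD terms now fight the main term rather than reinforcing it, and one cannot conclude anything from positivity alone. You would need to rule out the scenario where a density deficit is compensated by large positive $\mathrm{tr}(B_S^2)$, $\sum_v(d_S(v)-p|S|)^2$, etc.\ so that the clique count lands in the allowed window --- which is exactly the kind of cancellation your method is supposed to preclude, now with the sign working against you. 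The paper sidesteps this entirely by proving $\mathcal{P}^*_{2,q}$ at the true density $q$ first; you would need some analogue of that reduction before the expansion argument could be made sound.
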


\begin{theorem}
\label{graphs-graphs}
For every $0<p<1$ and natural number $r \geq 3$, there are constants $c=c(p,r)$ and $c'(p,r)$ such that 
if $H$ is a graph with $r$ vertices, $0<\epsilon < 1/2$ and a graph $G$ satisfies
${\cal P}^*_{H,p}(\delta)$ with $\delta = c'(p,r) \epsilon^{c(p,r)}$, then it also satisfies ${\cal P}^*_{2,p}(\epsilon)$.
\end{theorem}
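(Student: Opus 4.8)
The plan is to follow the strategy behind Theorem~\ref{complete-graphs}, adding the bookkeeping needed to cope with the fact that a general $H$ is not vertex-transitive. It suffices to establish quasirandomness in the bipartite form: for every pair of disjoint sets $A,B$ with $|A|,|B|\ge \eta n$ one has $e(A,B)=p|A||B|\pm\epsilon_0 n^2$, where $\eta$ and $\epsilon_0$ are polynomial in $\epsilon$. That this is equivalent to ${\cal P}^*_{2,p}(\epsilon)$ up to polynomial changes in the parameters is standard (one passes between counting edges inside a set and counting edges across a cut by random partitioning, and sets of sublinear size are handled trivially). So we may assume $G$ satisfies ${\cal P}^*_{H,p}(\delta)$ and must bound every bipartite density.

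The first step is to repackage the hypothesis as a multipartite counting statement. Write $r=v(H)$ and $m=e(H)$. For pairwise disjoint sets $A_1,\dots,A_r$ and a bijection $\pi\colon V(H)\to\{1,\dots,r\}$, let $N_\pi(A_1,\dots,A_r)$ be the number of copies of $H$ in $G$ placing the $v$-th vertex of $H$ into $A_{\pi(v)}$. Applying ${\cal P}^*_{H,p}(\delta)$ to each of the $2^r$ sets $\bigcup_{i\in J}A_i$ and Möbius-inverting over the subset lattice gives $\sum_\pi N_\pi(A_1,\dots,A_r)=r!\,p^{m}\textstyle\prod_i|A_i|\pm 2^r\delta n^r$, where the identity $\sum_{J}(-1)^{r-|J|}\big(\sum_{j\in J}a_j\big)^r=r!\prod_j a_j$ supplies the main term. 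When $H$ is a clique every $N_\pi$ is the same transversal-clique count, which collapses this to a clean identity and is exactly why one obtains the linear bound of Theorem~\ref{complete-graphs}; for a general $H$ the different bijections contribute genuinely different terms, and separating them is what forces a polynomial rather than linear dependence.

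The second step is to extract bipartite-density information from this identity by a peeling recursion. Choosing a vertex $v_0$ of $H$, each $N_\pi$ can be written as $\sum_{x} N^{H-v_0}(\cdots)$, where $x$ runs over the part hosting $v_0$ and the parts hosting the neighbours of $v_0$ are replaced by their intersections with $N_G(x)$; iterating down to a single surviving edge of $H$ expresses the identity above as a weighted average, over short tuples of vertices $x_1,x_2,\dots$, of quantities of the form $e\big(A\cap\bigcap_s N(x_s),\,B\cap\bigcap_t N(x_t)\big)$, valid for all linear-sized disjoint $A,B$ (and an essentially arbitrary choice of auxiliary parts). Crucially, each peeling step introduces exactly one new summation variable ranging over at most $n$ vertices while lowering the number of vertices of the model graph by one, so the accumulated error stays $O_r(\delta n^r)$: no spurious factor of $n$ is lost, and there is no exponential blow-up in the number of peeled coordinates, which is what keeps $\delta$ polynomial in $\epsilon$ rather than tower-type.

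The main obstacle is then to disentangle the common-neighbourhood restrictions: the peeling yields control of densities \emph{after} passing to intersections of neighbourhoods, whereas we want control of the plain densities $e(A,B)$. The tool is the elementary principle that if $\sum_{x\in X}g(x)$ is pinned down for \emph{every} set $X$, then $g$ is nearly constant; applying it repeatedly lets one first show that the codegrees $|N(x)\cap N(y)|$ are close to $p^2n$ for almost all pairs, then that degrees are close to $pn$ for almost all vertices, and finally, feeding these facts back into the peeled identity, that $e(A,B)\approx p|A||B|$ for all linear-sized $A,B$. Organising this bootstrap so that the exceptional sets at each stage remain of polynomially small measure, uniformly over the choices of $A$ and $B$ and over the peeled coordinates, and so that the neighbourhood restrictions can in fact be removed, is the technical heart of the argument, and precisely the place where the original proof invokes the regularity lemma. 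Handled with care it costs only a polynomial factor, yielding $\delta=c'(p,r)\,\epsilon^{c(p,r)}$ as claimed.
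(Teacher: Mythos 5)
Your first step matches the paper's Lemma~3.1: applying ${\cal P}^*_{H,p}(\delta)$ to all $2^r$ unions $\bigcup_{i\in J}A_i$ and M\"obius-inverting does give $\sum_\pi N_\pi(A_1,\dots,A_r)=r!\,p^m\prod_i|A_i|\pm(2^r-1)\delta n^r$; this is exactly the property the paper denotes ${\cal Q}_{H,p}$. From that point on, however, your proposal diverges from the paper and, more importantly, leaves the decisive step unresolved.

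Your peeling recursion expresses $\sum_\pi N_\pi$ as a nested sum over auxiliary vertices of edge counts between sets of the form $A\cap\bigcap_s N(x_s)$ and $B\cap\bigcap_t N(x_t)$, and you then propose a bootstrap (control codegrees, then degrees, then plain densities) to strip away the neighbourhood restrictions. But you explicitly flag that organising this bootstrap is ``the technical heart of the argument'' and ``precisely the place where the original proof invokes the regularity lemma,'' and then simply assert that it works with only polynomial loss. That assertion is the entire difficulty and is not justified. The nested sums mix bipartite densities over sets of wildly varying sizes, deviations of opposite signs can cancel inside the outer sum, and the principle that pinning down $\sum_{x\in X}g(x)$ for all $X$ forces $g$ to be nearly constant does not obviously propagate through several layers of nesting without losing far more than a polynomial factor. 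As written this is a genuine gap, not a routine omission.

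The paper handles this step very differently: after establishing ${\cal Q}_{H,p}$, it runs a density-increment argument (Lemma~3.5). Given a pair $(A,B)$ with $|d(A,B)/p-1|=\alpha$, one equitably splits $A$ and $B$ into $r$ parts; either all the relevant subpairs are lower-regular, in which case the counting lemma (Lemma~3.2) forces an $H$-count contradicting ${\cal Q}_{H,p}(\delta)$, or some subpair is irregular and yields a pair $(A',B')$ with multiplicatively larger deviation $(1+\beta)\alpha$. Iterating, the sets shrink only polynomially until $\alpha$ exceeds the fixed threshold $\tfrac{1}{16mr}$. Only then does the paper invoke a regularity-type tool, namely the single-set regularity lemma of Conlon--Fox (Lemma~3.6), whose poor dependence on its own parameter is harmless because it is applied at a scale $\eta$ depending only on $p$ and $r$, not on $\epsilon$; one then shows $C_1\cup D_1$ contains too many labeled copies of $H$, contradicting ${\cal P}^*_{H,p}(\delta)$. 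Your proposal contains neither the density-increment mechanism nor a substitute for it, so it does not yet constitute a proof.
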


We note that another alternative proof for a variant of the Simonovits--S\'os theorem, also avoiding regularity, was found independently by Reiher and Schacht~\cite{RS16}. However, their result uses slightly stronger assumptions and gives weaker quantitative control than ours. As well as being interesting in its own right, our close attention to quantitative aspects was motivated by the possibility of application in extremal combinatorics. Indeed, the best bounds for a number of well-known theorems in this area, including Ramsey's theorem~\cite{C09} and Szemer\'edi's theorem~\cite{G01}, rely crucially on the interplay between different notions of quasirandomness. Our results bring the Simonovits--S\'os theorem into a range where it could also be profitably applied in this manner.

The rest of the paper will be laid out as follows: we study complete graphs in the next section, proving Theorem~\ref{complete-graphs}; we treat the general case in Section~\ref{sec:general}, proving Theorem~\ref{graphs-graphs}; and we conclude with some further remarks and open problems in Section~\ref{sec:conclude}. For the sake of clarity of presentation, we systematically omit floor and ceiling signs whenever they are not crucial.

\section{Complete graphs} \label{sec:complete}

In this section, we prove Theorem~\ref{complete-graphs}. We will need several lemmas about graphs $G$ that satisfy ${\cal P}^*_{r,p}(\delta)$, the first of which estimates the number of $r$-cliques with exactly $i$ vertices in one set $X$ and $r-i$ vertices in another set $Y$. The proof draws on ideas used by Shapira~\cite{S08} when studying a related problem. Here and throughout this section, we will use big $O$ notation, allowing the hidden constants to depend on the clique size $r$ but not on the edge density $p$. In keeping with the statement of Theorem~\ref{complete-graphs}, we will always assume that $n$ is taken sufficiently large.

\begin{lemma}\label{two-sets}
Let $G$ be an $n$-vertex graph that satisfies
${\cal P}^*_{r,p}(\delta)$. Then, for all disjoint subsets $X,Y$ of $V(G)$, the number of labeled $r$-cliques with exactly $i$ vertices in $X$ and $r-i$ vertices in $Y$ deviates from ${r \choose i} p^{{r \choose 2}}|X|^i|Y|^{r-i}$ by at most $O(\delta n^r)$.
\end{lemma}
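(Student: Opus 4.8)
Throughout, write $N_i$ for the number of labeled $r$-cliques of $G$ with exactly $i$ vertices in $X$ and $r-i$ vertices in $Y$, so that the lemma asks us to show $N_i = {r \choose i}p^{{r \choose 2}}|X|^i|Y|^{r-i}+O(\delta n^r)$ for every $0\le i\le r$. The plan is to pin down all $r+1$ of these quantities at once by manufacturing a system of $r+1$ linear equations in the $N_i$ whose coefficient matrix is, up to a negligible perturbation, a Vandermonde matrix, and then inverting.

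I would first dispose of the degenerate cases. If $|X|<\sqrt{n}$, then for every $i\ge 1$ both $N_i$ and the target ${r \choose i}p^{{r \choose 2}}|X|^i|Y|^{r-i}$ are at most ${r \choose i}\,|X|^i n^{r-i}\le {r \choose i}\,n^{r-1/2}=O(\delta n^r)$ (here we use that $n$ is large relative to $1/\delta$), so the estimate is immediate for those $i$, while $N_0$ is exactly the number of labeled $r$-cliques in $G[Y]$, which is $p^{{r \choose 2}}|Y|^r\pm\delta n^r$ by $\mathcal{P}^*_{r,p}(\delta)$. The case $|Y|<\sqrt{n}$ is symmetric, with $G[X]$ handling $i=r$. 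So assume henceforth that $|X|,|Y|\ge\sqrt{n}$. Now, for $\alpha\in[0,1]$ with $\alpha|X|$ an integer, let $X_\alpha$ be a uniformly random $\alpha|X|$-element subset of $X$. For every outcome, $X_\alpha\cup Y$ has size $\alpha|X|+|Y|$ since $X$ and $Y$ are disjoint, so $\mathcal{P}^*_{r,p}(\delta)$ guarantees that $G[X_\alpha\cup Y]$ contains $p^{{r \choose 2}}(\alpha|X|+|Y|)^r\pm\delta n^r$ labeled $r$-cliques; hence the same bound holds for the average of this count over the choice of $X_\alpha$. On the other hand, a labeled $r$-clique counted by $N_i$ survives inside $X_\alpha\cup Y$ precisely when all $i$ of its $X$-vertices land in $X_\alpha$, an event of probability $c_i(\alpha):=\prod_{j=0}^{i-1}\frac{\alpha|X|-j}{|X|-j}=\alpha^i+O(1/\sqrt{n})$, using $|X|\ge\sqrt{n}$. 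Summing over the $r$-cliques lying inside $X\cup Y$ yields the identity $\sum_{i=0}^r c_i(\alpha)N_i = p^{{r \choose 2}}(\alpha|X|+|Y|)^r\pm\delta n^r$.

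To finish, choose distinct values $\alpha_0,\dots,\alpha_r$, one within $O(1/\sqrt{n})$ of $k/r$ for each $k=0,\dots,r$ and with $\alpha_k|X|$ integral, and collect the corresponding $r+1$ equations. Their coefficient matrix $(c_i(\alpha_k))_{0\le k,i\le r}$ equals the Vandermonde matrix $\big((k/r)^i\big)_{k,i}$ plus an entrywise $O(1/\sqrt{n})$ perturbation, and since the nodes $k/r$ are fixed and distinct this Vandermonde matrix is invertible with the norm of its inverse bounded in terms of $r$ alone, so for $n$ large the perturbed matrix has the same property. By the binomial theorem, $\sum_i {r \choose i}(k/r)^i|X|^i|Y|^{r-i}=\big((k/r)|X|+|Y|\big)^r$, so the vector with entries ${r \choose i}p^{{r \choose 2}}|X|^i|Y|^{r-i}$ is the unique solution of the \emph{idealized} system obtained from ours by replacing $c_i(\alpha_k)$ with $(k/r)^i$, replacing $p^{{r \choose 2}}(\alpha_k|X|+|Y|)^r$ with $p^{{r \choose 2}}\big((k/r)|X|+|Y|\big)^r$, and dropping the $\pm\delta n^r$ errors. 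Comparing the actual system with the idealized one — the matrices differ by $O(1/\sqrt{n})$ entrywise, the right-hand sides by $O(n^{r-1/2}+\delta n^r)=O(\delta n^r)$ entrywise, and the idealized solution has entries $O(n^r)$ — a standard perturbation estimate gives $N_i={r \choose i}p^{{r \choose 2}}|X|^i|Y|^{r-i}+O\big((1/\sqrt{n})n^r+\delta n^r\big)=O(\delta n^r)$, which is the assertion of the lemma.

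The one point demanding any care is the stability of this linear system: one must verify that the perturbations of the Vandermonde matrix — the gap between $c_i(\alpha)$ and $(k/r)^i$ and the rounding of $\alpha_k|X|$ to an integer — neither destroy its invertibility nor inflate the error past $O(\delta n^r)$. This is routine once $n$ is large relative to $r$, $p$ and $\epsilon$, and there is no real conceptual obstacle. The genuinely useful idea, and the reason the argument is lossless enough to keep $\delta$ linear in the quantity that ultimately matters, is simply that the hereditary hypothesis $\mathcal{P}^*_{r,p}(\delta)$ can be applied to each of the blended sets $X_\alpha\cup Y$, turning the problem of counting $r$-cliques split between $X$ and $Y$ into linear algebra over a Vandermonde system.
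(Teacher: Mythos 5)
Your proof is correct and follows essentially the same strategy as the paper: apply $\mathcal{P}^*_{r,p}(\delta)$ to a random subset of $X$ together with $Y$, observe that the resulting expected clique counts give a linear system in the $N_i$ whose coefficient matrix is (a perturbation of) a Vandermonde matrix at $r+1$ distinct nodes, and invert. The paper compresses the $|X|$ small case and the $\alpha^i$ versus $c_i(\alpha)$ discrepancy into the phrase "up to lower order terms," whereas you make these error bounds explicit — a presentational rather than substantive difference.
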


\begin{proof}
Let $x_i={r \choose i} p^{{r \choose 2}}|X|^i|Y|^{r-i}$ and let $x_i'$ be the number of labeled $r$-cliques with exactly $i$ vertices in $X$ and $r-i$ vertices in $Y$. Pick a random subset $X' \subseteq X$ of order $q|X|$. The expected number of labeled cliques of order $r$ in $X' \cup Y$ is (up to lower order terms)
$\sum_i q^i x'_i$ and this deviates from 
$$p^{{r \choose 2}} (q|X|+|Y|)^r= \sum_i q^ip^{{r \choose 2}}{r \choose i} |X|^i|Y|^{r-i}  = \sum_i q^i x_i$$ 
by at most $O(\delta n^r)$.

For $j = 1, 2, \dots, r+1$, let $q_j=j/(r+1)$ and let $A$ be the $(r+1) \times (r+1)$ matrix with
$a_{ji}=q_j^i$. The matrix $A$ is not singular, since it is a Vandermonde matrix and the $q_j$ are distinct.
Let $a$ be a maximum in absolute value entry of $A^{-1}$, noting that this depends only on $r$.
Let $x=(x_0, x_1, \ldots, x_r)$ and $x'=(x'_0, x'_1, \ldots, x'_r)$.
By the above discussion, we know that the coordinates of the vectors $z=Ax$ and $z'=Ax'$ differ by at most
$O(\delta n^r)$. Since $x-x'=A^{-1}(z-z')$, it follows that the coordinates of the vectors 
$x$ and $x'$ differ by at most $O(r \cdot |a| \cdot \delta n^r)=O(\delta n^r)$, completing the proof.
\end{proof}

We will need a corollary of this lemma saying that for any subset $U \subseteq V(G)$, most $u \in U$ are contained in approximately the same number of $r$-cliques in $U$. The following definition helps capture this condition.

\vspace{3mm}

\noindent
{\bf Definition.} Given a subset $U\subseteq V(G)$ and a vertex $u \in U$, let $c_{U}(u)$ denote the number of $r$-cliques in $U$ containing $u$ and $disc_{U}(u)=\big|c_U(u) - p^{{r \choose 2}} |U|^{r-1}/(r-1)!\big|$.

\begin{corollary}
\label{vertex-discrepancy}
Let $G$ be an $n$-vertex graph that satisfies
${\cal P}^*_{r,p}(\delta)$. Then $\sum_{u \in U} disc_{U}(u) = O(\delta n^r)$. 
\end{corollary}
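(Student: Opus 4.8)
The plan is to deduce this from Lemma \ref{two-sets} by a standard "put one vertex aside" argument together with a convexity (second moment) estimate. Fix $U \subseteq V(G)$. For each $u \in U$, write $X = \{u\}$ and $Y = U \setminus \{u\}$; then $c_U(u)$ is essentially (up to the normalization by $(r-1)!$, and up to lower-order terms coming from cliques meeting $u$ in more than one vertex, which is impossible here since $|X|=1$) the number of labeled $r$-cliques with exactly one vertex in $X$ and $r-1$ in $Y$, divided by $(r-1)!$. Lemma \ref{two-sets} with $i=1$, $|X|=1$, $|Y| = |U|-1$ tells us this count deviates from $r\,p^{\binom r2}|Y|^{r-1} = r\,p^{\binom r2}(|U|-1)^{r-1}$ by $O(\delta n^r)$; dividing by $(r-1)!$ and absorbing the difference between $(|U|-1)^{r-1}$ and $|U|^{r-1}$ into the error (this costs only $O(n^{r-2})$, which is lower order), we get that for \emph{every} single $u \in U$,
\[
disc_U(u) = \left| c_U(u) - \frac{p^{\binom r2}|U|^{r-1}}{(r-1)!} \right| = O(\delta n^r).
\]
Summing over the at most $n$ vertices $u \in U$ would then only give $O(\delta n^{r+1})$, which is too weak. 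So the naive per-vertex bound is not enough, and the real content is to get cancellation in the sum.

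To get the sharper bound $\sum_{u\in U} disc_U(u) = O(\delta n^r)$, I would instead control the sum of the \emph{signed} deviations and the sum of \emph{squares} separately, and combine them via Cauchy--Schwarz. First, $\sum_{u \in U} c_U(u) = r \cdot (\text{number of } r\text{-cliques in } U)$, and by ${\cal P}^*_{r,p}(\delta)$ applied to $S = U$ this is $r \cdot \big(p^{\binom r2}|U|^r/r! \pm O(\delta n^r)\big) = p^{\binom r2}|U|^r/(r-1)! \pm O(\delta n^r)$; hence the sum of signed deviations $\sum_{u\in U}\big(c_U(u) - p^{\binom r2}|U|^{r-1}/(r-1)!\big)$ is $O(\delta n^r)$. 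Second, I want $\sum_{u \in U} c_U(u)^2$ to be close to $|U|\big(p^{\binom r2}|U|^{r-1}/(r-1)!\big)^2$. The quantity $\sum_{u\in U} c_U(u)^2$ counts ordered pairs of $r$-cliques in $U$ sharing a common vertex, which one can write as a sum over the size of the intersection; the dominant term (intersection exactly a single vertex) is governed by a two-colour clique count and the lower-intersection terms are negligible. Expanding the square $\sum_u disc_U(u)^2 = \sum_u c_U(u)^2 - 2\big(p^{\binom r2}|U|^{r-1}/(r-1)!\big)\sum_u c_U(u) + |U|\big(p^{\binom r2}|U|^{r-1}/(r-1)!\big)^2$ and using the first and second moment estimates, the main terms cancel and we are left with $\sum_{u\in U} disc_U(u)^2 = O(\delta n^{2r-1})$. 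Then Cauchy--Schwarz gives
\[
\sum_{u \in U} disc_U(u) \le \left( |U| \sum_{u\in U} disc_U(u)^2 \right)^{1/2} = \left( n \cdot O(\delta n^{2r-1}) \right)^{1/2} = O(\sqrt\delta\, n^r),
\]
which is already a polynomial improvement; combining this with the per-vertex bound $disc_U(u) = O(\delta n^r)$ on the $O(\sqrt\delta\, n)$ "bad" vertices where $disc_U(u)$ is not $O(\delta n^{r-1})$ sharpens the conclusion back to the claimed $O(\delta n^r)$. (Alternatively, one iterates: the same reasoning applied inside a random subset, as in the proof of Lemma \ref{two-sets}, pins down the relevant clique-pair counts tightly enough to get the clean linear-in-$\delta$ bound directly.)

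The main obstacle I anticipate is the second-moment computation: one needs that $\sum_{u\in U} c_U(u)^2$ — equivalently, the number of ordered pairs of $r$-cliques in $U$ meeting in at least one vertex — is $p^{2\binom r2}|U|^{2r-1}/((r-1)!)^2 + O(\delta n^{2r-1})$. The term coming from pairs whose intersection is exactly one vertex is a "two-set" count of the type handled by Lemma \ref{two-sets} (colour the shared vertex one colour, the remaining $2(r-1)$ vertices another, or more precisely set up the $q$-random-subset/Vandermonde argument for this configuration), while pairs sharing $\ge 2$ vertices contribute only $O(n^{2r-2})$ and are swallowed by the error term. Making this bookkeeping precise — in particular getting the constant to match so that the leading terms genuinely cancel — is where the care is needed; everything else is routine.
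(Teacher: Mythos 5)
Your approach diverges substantially from the paper's, and unfortunately the divergence introduces a genuine gap. The second-moment route you sketch can at best deliver $\sum_{u\in U} disc_U(u) = O(\sqrt\delta\, n^r)$ after Cauchy--Schwarz, and the closing claim that this can be ``sharpened back to $O(\delta n^r)$'' by controlling the bad vertices does not go through: from $\sum_u disc_U(u)^2 = O(\delta n^{2r-1})$, Markov gives you at most $O(\delta n^{2r-1}/t^2)$ vertices with $disc_U(u) > t$, and no choice of threshold $t$ yields both a small enough set of bad vertices and a small enough contribution from the good ones to reach $O(\delta n^r)$ --- you would need $O(1)$ bad vertices, not $O(\sqrt\delta\, n)$, and the estimate simply does not provide that. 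Moreover, the second-moment input $\sum_u c_U(u)^2 = p^{2\binom r2}|U|^{2r-1}/((r-1)!)^2 + O(\delta n^{2r-1})$ is not a consequence of Lemma~\ref{two-sets} as stated: that lemma counts $r$-cliques split across two sets, whereas the quantity you need counts ordered \emph{pairs} of $r$-cliques sharing a vertex, a $(2r-1)$-vertex configuration. Your parenthetical ``set up the Vandermonde argument for this configuration'' is a new lemma you would have to prove, and it is not clear it follows from ${\cal P}^*_{r,p}(\delta)$ alone.

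The paper's proof is considerably simpler and entirely first-moment. Partition $U = U' \cup U''$ according to the sign of $c_U(u) - p^{\binom r2}|U|^{r-1}/(r-1)!$: over $U'$ all deviations are nonnegative and over $U''$ all are nonpositive, so $\sum_{u\in U} disc_U(u) = \Sigma_1 + \Sigma_2$ with $\Sigma_1 = \sum_{u\in U'}\bigl(c_U(u) - p^{\binom r2}|U|^{r-1}/(r-1)!\bigr) \geq 0$ and $\Sigma_2 = -\sum_{u\in U''}\bigl(\cdots\bigr) \geq 0$. Now $\sum_{u\in U'} c_U(u) = \sum_{i=0}^r i\, N_i$, where $N_i$ is the number of $r$-cliques in $G[U]$ with exactly $i$ vertices in $U'$, and the ``expected'' value $|U'|\,p^{\binom r2}|U|^{r-1}/(r-1)!$ is exactly $\sum_i i\cdot\binom{r}{i}p^{\binom r2}|U'|^i|U''|^{r-i}/r!$ by the binomial identity. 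Lemma~\ref{two-sets} bounds each $|N_i - \mathbb E N_i|$ by $O(\delta n^r)$, so the sum of $r+1$ such terms (each weighted by $i \le r$) gives $\Sigma_1 = O(\delta n^r)$, and likewise for $\Sigma_2$. The crucial trick you are missing is that splitting by the \emph{sign} of the deviation converts the absolute values into an honest signed sum, which can then be read off directly from the two-set clique counts --- no squaring, no Cauchy--Schwarz, and no polynomial loss in $\delta$.
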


\begin{proof}
Partition $U$ into two sets $U', U''$ such that $U'$ is the set of all vertices $u \in U$ satisfying
$c_U(u) \geq p^{{r \choose 2}} |U|^{r-1}/(r-1)!$. Then we can write 
$\sum_{u \in U} disc_{U}(u)= \sum_1+\sum_2$, where $\sum_1=\sum_{u \in U'} \big(c_U(u) - p^{{r \choose 2}} |U|^{r-1}/(r-1)!\big)$ and
$\sum_2=-\sum_{u \in U''} \big(c_U(u) - p^{{r \choose 2}} |U|^{r-1}/(r-1)!\big)$. Note that $\sum_1$ can be written as a sum of $r$ terms, each estimating the deviation of $i$ times the number of $r$-cliques in $G[U]$ with exactly $i$ vertices in $U'$ and $r-i$ vertices in $U''$. By Lemma \ref{two-sets}, all these terms are bounded by $O(\delta n^r)$ and, therefore, $\sum_1=O(\delta n^r)$. A similar argument shows that $\sum_2=O(\delta n^r)$.
\end{proof}

\vspace{3mm}

\noindent
{\bf Definition.} Given two vertices $u, v \in V(G)$, let $c(u,v)$ denote the number of subsets $S \subset V(G)$ of order $r-1$ such that
both $\{u\} \cup S$ and $\{v\} \cup S$ form an $r$-clique in $G$ and write 
$$disc(u,v)=\left|c(u,v) - p^{{r \choose 2}} d^{r-1}(v)/(r-1)!\right|,$$
where $d(v)$ is the order of the neighborhood of $v$.

\vspace{3mm}

\noindent
Note that by definition $c(u,v)=c(v,u)$. Therefore, by the triangle inequality, we have
$$disc(u,v)+disc(v,u) \geq  p^{{r \choose 2}} \big|d^{r-1}(v)-d^{r-1}(u)\big|/(r-1)!\,.$$
Using this inequality, we can prove the following lemma which shows that most pairs of vertices in $G$ have comparable degree.

\begin{lemma}\label{degrees}
Let $G$ be an $n$-vertex graph that satisfies
${\cal P}^*_{r,p}(\delta)$. Then 
$$\sum_{u,v \in V(G)} |d^{r-1}(v)-d^{r-1}(u)| = O(p^{-{r \choose 2}}\delta n^{r+1}). $$
\end{lemma}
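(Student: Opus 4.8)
The plan is to reduce the claimed estimate to a bound on the double sum $\sum_{u,v}disc(u,v)$, and then to control that sum by splitting, for each fixed $v$, according to whether the other vertex lies in $N(v)$ or not. Write $N[v]=N(v)\cup\{v\}$ and $\mu_v=p^{\binom{r}{2}}d^{r-1}(v)/(r-1)!$. The reduction is immediate from the inequality recorded just before the lemma: for every pair $u\neq v$,
$$|d^{r-1}(v)-d^{r-1}(u)|\le \frac{(r-1)!}{p^{\binom{r}{2}}}\bigl(disc(u,v)+disc(v,u)\bigr),$$
and since the diagonal terms of the sum in the statement vanish, it suffices to prove $\sum_{u\neq v}disc(u,v)=O(\delta n^r)$ for each fixed $v$; summing over $v$ and feeding the result back into the displayed inequality then gives the lemma.

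For the vertices $u\in N(v)$ I would apply Corollary~\ref{vertex-discrepancy} with $U=N(v)$. The point is that an $r$-clique of $G[N(v)]$ through $u$ is exactly a set $\{u\}\cup S$ with $S$ an $(r-1)$-clique contained in $N(u)\cap N(v)$; hence $c_{N(v)}(u)=c(u,v)$ and $disc_{N(v)}(u)=disc(u,v)$, so Corollary~\ref{vertex-discrepancy} yields $\sum_{u\in N(v)}disc(u,v)=\sum_{u\in N(v)}disc_{N(v)}(u)=O(\delta n^r)$.

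The vertices $u\notin N[v]$ are the crux, since there $u$ is not even in $N(v)$ and Corollary~\ref{vertex-discrepancy} does not apply. Split them into $X^{+}=\{u\notin N[v]:c(u,v)\ge\mu_v\}$ and $X^{-}=\{u\notin N[v]:c(u,v)<\mu_v\}$. For $Z\in\{X^{+},X^{-}\}$, a labeled $r$-clique having exactly one vertex in $Z$ and its remaining $r-1$ vertices in $N(v)$ is precisely an ordering of a set $\{u\}\cup S$ with $u\in Z$ and $S$ an $(r-1)$-clique in $N(u)\cap N(v)$ (the vertex $u$ automatically lies outside $S\subseteq N(v)$), so there are exactly $r!\sum_{u\in Z}c(u,v)$ of them. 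Applying Lemma~\ref{two-sets} with the disjoint sets $X=Z$, $Y=N(v)$ and $i=1$, this count is $r\,p^{\binom{r}{2}}|Z|\,d(v)^{r-1}\pm O(\delta n^r)$, so that $\sum_{u\in Z}\bigl(c(u,v)-\mu_v\bigr)=\pm O(\delta n^r)$. As $c(u,v)-\mu_v$ has constant sign on each of $X^{+}$ and $X^{-}$, this upgrades to $\sum_{u\notin N[v]}disc(u,v)=\sum_{u\in X^{+}}|c(u,v)-\mu_v|+\sum_{u\in X^{-}}|c(u,v)-\mu_v|=O(\delta n^r)$.

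Combining the two cases gives $\sum_{u\neq v}disc(u,v)=O(\delta n^r)$ for every $v$, hence $\sum_{u,v}disc(u,v)=O(\delta n^{r+1})$, which finishes the proof via the first paragraph. The main obstacle is the non-neighbour case: one has to notice that $r!\sum_{u\in Z}c(u,v)$ is a two-set $r$-clique count of the type controlled by Lemma~\ref{two-sets}, and then use the sign-splitting into $X^{\pm}$ to pass from the signed estimate that Lemma~\ref{two-sets} delivers to the sum of absolute values appearing in $disc$. The bookkeeping identities $c_{N(v)}(u)=c(u,v)$ and the corresponding interpretation of $r!\sum_{u\in Z}c(u,v)$ are routine once set up carefully.
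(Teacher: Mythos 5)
Your proof is correct and follows essentially the same route as the paper's: both reduce to showing $\sum_{u\neq v}disc(u,v)=O(\delta n^r)$ for each fixed vertex, handle the neighbour case via Corollary~\ref{vertex-discrepancy} applied to the neighbourhood, and handle the non-neighbour case by splitting by the sign of $c(u,v)-\mu_v$ and applying Lemma~\ref{two-sets} with $i=1$. The only difference is cosmetic (you fix $v$ and sum over $u$; the paper fixes $u$ and sums over $v$), and you make explicit the bookkeeping identity $c_{N(v)}(u)=c(u,v)$ that the paper uses implicitly.
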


\begin{proof}
By the above discussion, we have that
$$ \sum_u \sum_{v \neq u} disc(v,u)=\sum_{\{u,v\}}(disc(u,v)+disc(v,u)) \geq  p^{{r \choose 2}} \sum_{\{u,v\}}\big|d^{r-1}(v)-d^{r-1}(u)\big|/(r-1)!\,.$$
Therefore, to prove the statement, it is enough to show that $\sum_{v \neq u} disc(v,u)= O(\delta n^r)$ for each $u$.

Let $U$ be the set of neighbors of $u$ in $G$ and let $W$ be the complement of $U$. Partition $W$ further into $W', W''$, where $W'$ is the set of all vertices $v$ such that  $c(v,u) \geq p^{{r \choose 2}} d^{r-1}(u)/(r-1)!$. Then we can write 
$\sum_{v \neq u} disc(v, u)= \sum_1+\sum_2+\sum_3$, where $\sum_1=\sum_{v \in W'} \big(c(v,u) - p^{{r \choose 2}} d^{r-1}(u)/(r-1)!\big)$,
$\sum_2= -\sum_{v \in W''} \big(c(v,u) - p^{{r \choose 2}} d^{r-1}(u)/(r-1)!\big)$ and 
$\sum_3=\sum_{v \in U} disc_{U}(v)$.  The first (resp., second) sum estimates the deviation of the number of $r$-cliques with one vertex in $W'$ (resp., $W''$) and the remaining $r-1$ vertices in $U$. Thus, by Lemma~\ref{two-sets}, it is bounded by $O(\delta n^r)$.
The third sum is bounded by $O(\delta n^r)$ by Corollary \ref{vertex-discrepancy}. 
\end{proof}

We also need an elementary inequality, which follows as an easy corollary of the next result.

\begin{proposition}
\label{inequality1}
Let $a_1, \dots, a_n$ and $b_1, \ldots, b_n$ be two sets of $n$ non-negative numbers. Then, for any positive integer $s$,
$$\sum_{i,j} |b_j^s-a_i^s|  \geq n \sum_j b_j^s - \sum_j b_j^{s-1} \cdot\sum_i a_i.$$
\end{proposition}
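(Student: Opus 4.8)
The plan is to reduce the statement to a single pointwise inequality and then sum. Specifically, I would first prove that for all non-negative reals $a,b$ and every positive integer $s$,
$$|b^s - a^s| \geq b^s - b^{s-1}a = b^{s-1}(b-a).$$
Granting this, summing over all $i,j \in \{1,\dots,n\}$ with $a=a_i$ and $b=b_j$ yields
$$\sum_{i,j} |b_j^s - a_i^s| \geq \sum_{i,j}\left(b_j^s - b_j^{s-1}a_i\right) = n\sum_j b_j^s - \sum_j b_j^{s-1}\sum_i a_i,$$
which is exactly the claimed bound.

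To establish the pointwise inequality I would split into two cases according to the sign of $b-a$. If $a \geq b$, then the right-hand side $b^{s-1}(b-a)$ is non-positive, while the left-hand side is non-negative, so the inequality is immediate. If $a < b$, then both sides are non-negative and it suffices to check $b^s - a^s \geq b^s - b^{s-1}a$, i.e.\ $b^{s-1}a \geq a^s$, i.e.\ $a\big(b^{s-1} - a^{s-1}\big) \geq 0$; this holds since $0 \leq a < b$ forces $b^{s-1} \geq a^{s-1}$. This disposes of both cases.

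There is no real obstacle here beyond spotting the correct one-variable inequality: the asymmetry between the two index sets in the conclusion (the exponent $s$ appears on $b_j$ but the sum over $a_i$ is linear) is precisely mirrored by the asymmetric bound $|b^s-a^s|\geq b^{s-1}(b-a)$, so once that inequality is written down the rest is just summation. I would present the argument in essentially the two short paragraphs above.
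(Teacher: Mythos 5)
Your proof is correct and follows essentially the same route as the paper: both reduce the claim to the pointwise inequality $|b^s-a^s|\geq b^{s-1}(b-a)$ and then sum over $i,j$. The only difference is cosmetic—where you verify the pointwise step by a two-case argument, the paper factors $b^s-a^s=(b-a)(b^{s-1}+\cdots+a^{s-1})$, drops all but the leading term $b^{s-1}$, and applies $|x|\geq x$.
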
  

\begin{proof}
This follows since
\begin{align*}
\sum_{i,j} |b_j^s - a_i^s| & = \sum_{i,j} |b_j - a_i||b_j^{s-1} + \dots + a_i^{s-1}| \geq \sum_{i,j} |b_j - a_i| b_j^{s-1}\\
& \geq \sum_{i,j} (b_j - a_i) b_j^{s-1} = n \sum_j b_j^s - \sum_j b_j^{s-1} \cdot \sum_i a_i,
\end{align*}
where in both inequalities we used the fact that the $a_i$ and $b_j$ are non-negative and in the second inequality we also used the reverse triangle inequality $|x - y| \geq |x| - |y|$.
\end{proof}

\begin{corollary}
\label{inequality2}
Let $a_1, \dots, a_n$ and $b_1, \ldots, b_n$ be two sets of $n$ non-negative numbers. Then, for any positive integer $s$, $\sum_{i,j} |b_j^s-a_i^s|  \geq  \sum_j b_j^{s-1} \cdot(\sum_j b_j -\sum_i a_i).$
\end{corollary}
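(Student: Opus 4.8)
The plan is to deduce this directly from Proposition~\ref{inequality1}. That proposition gives
$$\sum_{i,j} |b_j^s-a_i^s| \;\geq\; n \sum_j b_j^s - \sum_j b_j^{s-1} \cdot\sum_i a_i,$$
so, comparing with the desired bound $\sum_j b_j^{s-1}\cdot\big(\sum_j b_j - \sum_i a_i\big) = \big(\sum_j b_j^{s-1}\big)\big(\sum_j b_j\big) - \sum_j b_j^{s-1}\cdot\sum_i a_i$, it suffices to show that
$$n \sum_j b_j^s \;\geq\; \Big(\sum_j b_j^{s-1}\Big)\Big(\sum_j b_j\Big).$$

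This last inequality is exactly Chebyshev's sum inequality applied to the two sequences $(b_j^{s-1})_j$ and $(b_j)_j$, which are sorted in the same order because all the $b_j$ are non-negative. If one prefers to avoid quoting it, the same fact follows by expanding the manifestly non-negative quantity $\sum_{i,j}(b_i^{s-1}-b_j^{s-1})(b_i-b_j) = 2n\sum_j b_j^s - 2\big(\sum_j b_j^{s-1}\big)\big(\sum_j b_j\big) \geq 0$, where non-negativity of each summand again uses that the $b_j$ are non-negative (so $b_i \geq b_j$ iff $b_i^{s-1}\geq b_j^{s-1}$). Combining this with the bound from Proposition~\ref{inequality1} yields the claimed inequality.

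There is no real obstacle here: the only ingredient beyond Proposition~\ref{inequality1} is the elementary Chebyshev-type estimate above, and the whole argument is a one-line chain of inequalities.
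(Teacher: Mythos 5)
Your argument is correct. Like the paper, you reduce via Proposition~\ref{inequality1} to the single inequality
\[
n \sum_j b_j^s \;\geq\; \Bigl(\sum_j b_j^{s-1}\Bigr)\Bigl(\sum_j b_j\Bigr),
\]
but you establish it differently. The paper applies Jensen's inequality twice, first with the convex function $x^{s/(s-1)}$ to get $\frac{1}{n}\sum_j b_j^s \geq (\frac{1}{n}\sum_j b_j^{s-1})^{s/(s-1)}$ and then with $x^{s-1}$ to get $(\frac{1}{n}\sum_j b_j^{s-1})^{1/(s-1)} \geq \frac{1}{n}\sum_j b_j$, and multiplies. You instead invoke Chebyshev's sum inequality for the similarly ordered sequences $(b_j^{s-1})$ and $(b_j)$, or equivalently expand the manifestly non-negative symmetric sum $\sum_{i,j}(b_i^{s-1}-b_j^{s-1})(b_i-b_j)\geq 0$. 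Both are standard and elementary; your symmetric-expansion version is arguably more self-contained and also handles $s=1$ uniformly (every summand is zero), whereas the exponent $s/(s-1)$ in the paper's Jensen step is formally undefined at $s=1$ (though that case is trivial for the corollary anyway). Either way the two routes meet at the same intermediate inequality, so the difference is purely in how one proves that power-mean/Chebyshev-type fact.
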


\begin{proof}
Applying Jensen's inequality twice, first with the function $x^{s/(s-1)}$ and then with the function
$x^{s-1}$, we obtain that 
$\frac{1}{n}\sum_j b_j^s \geq \left(\frac{1}{n}\sum_j b_j^{s-1}\right)^{s/(s-1)}$ and 
$\left(\frac{1}{n}\sum_j b_j^{s-1}\right)^{1/(s-1)} \geq \frac{1}{n}\sum_j b_j$.
Therefore, $\frac{1}{n}\sum_j b_j^s \geq \frac{1}{n}\sum_j b_j^{s-1} \cdot \frac{1}{n}\sum_j b_j$.
Together with Proposition~\ref{inequality1}, this proves the corollary. 
\end{proof}

We now consider a converse to our intended theorem, saying that if a graph satisfies ${\cal P}^*_{2,p}(\gamma)$, then it also satisfies ${\cal P}^*_{r,p}(r^2 \gamma)$. Versions of this counting lemma already appear in the literature, for example, in Section 10.5 of Lov\'asz' book on graph limits~\cite{Lo}. However, because this result is central to our estimates and not as well known as it should be, we include the proof.

\begin{lemma} \label{lem:counting}
If a graph $G$ satisfies ${\cal P}^*_{2,p}(\gamma)$, then it also satisfies ${\cal P}^*_{r,p}(r^2 \gamma)$.
\end{lemma}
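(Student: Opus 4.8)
The statement: if $G$ satisfies $\mathcal{P}^*_{2,p}(\gamma)$ (every $S$ has $p|S|^2 \pm \gamma n^2$ labeled edges — i.e. $\pm \frac12 \gamma n^2$ edges, or here $\gamma n^2$ labeled), then $G$ satisfies $\mathcal{P}^*_{r,p}(r^2\gamma)$ (every $S$ has $p^{\binom r2}|S|^r \pm r^2\gamma n^r$ labeled $K_r$'s).

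The standard approach: telescoping / hybrid argument replacing "edge slots" one at a time. Fix $S \subseteq V(G)$ with $|S| = s$. I want to count ordered $r$-tuples $(v_1,\dots,v_r) \in S^r$ that form a clique. The idea is to build up the clique edge by edge (or vertex by vertex) and at each step use the edge-density hypothesis applied to appropriate subsets to control the error.

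Let me think about the cleanest version. For an ordered tuple $(v_1, \ldots, v_r)$, let $\mathbf{1}_{\text{clique}}$ be the product over pairs $i<j$ of $A(v_i,v_j)$ where $A$ is the adjacency indicator. We want $\sum_{(v_1,\ldots,v_r)\in S^r} \prod_{i<j} A(v_i,v_j)$. The hybrid argument: order the $\binom r2$ pairs $e_1, e_2, \ldots, e_{\binom r2}$ somehow, and define a sequence of quantities where in the $k$-th hybrid the first $k$ factors $A(e_1),\ldots,A(e_k)$ are kept as genuine adjacency and the remaining are replaced by the constant $p$. The $0$-th hybrid is $p^{\binom r2} s^r$ (the target); the last hybrid is the true clique count. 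So it suffices to bound each consecutive difference by $\gamma n^r$ and control the number of steps — but $\binom r2$ steps each costing $\gamma n^r$ would give $\binom r2 \gamma n^r < r^2\gamma n^r$, exactly the claimed bound.

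The work is in bounding one hybrid step. Replacing $A(v_i,v_j)$ by $p$ while the pairs $e_1,\ldots,e_{k-1}$ are "active": the difference between consecutive hybrids is $\sum$ over the tuple of $[A(v_i,v_j) - p]$ times a product of some $A$'s (the already-processed active pairs, each of which involves $v_i$, $v_j$, or neither) times $p$ to some power. To handle this, the right pair-ordering is crucial: I should order pairs so that when we process the pair $\{i,j\}$, all previously processed active pairs that touch $\{i,j\}$ touch it only at $v_i$ (say), never at $v_j$ — e.g. process pairs in the order $\{1,2\},\{1,3\},\{2,3\},\{1,4\},\{2,4\},\{3,4\},\ldots$, i.e. add vertices one at a time and for vertex $j$ process pairs $\{1,j\},\ldots,\{j-1,j\}$; actually I need the freshly-introduced vertex $v_j$ to be "free." Fix all vertices except $v_j$; then $\sum_{v_j \in S}[A(v_i,v_j)-p]\cdot(\text{stuff not involving } v_j \text{ except via the active pairs already processed for } j)$. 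The active already-processed pairs for vertex $j$ are $\{1,j\},\ldots,\{i-1,j\}$, all of which involve $v_j$. Hmm, that's a problem — they do involve $v_j$.

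Better: the clean trick is to sum over the "new endpoint" of the pair being the last slot, and absorb the already-processed factors touching that endpoint into restricting the summation set. That is, if the active pairs touching $v_j$ are precisely $\{1,j\},\ldots,\{i-1,j\}$ with $i-1$ of them, then $v_j$ ranges over the common neighborhood $N(v_1)\cap\cdots\cap N(v_{i-1})$ — call it $T$, a subset of $S$ depending on $v_1,\ldots,v_{i-1}$ — and the sum becomes $\sum_{v_j \in T}[A(v_i,v_j)-p]$, which is (number of edges from $v_i$ into $T$) $- p|T|$. Now invoke $\mathcal{P}^*_{2,p}$: applying the edge-count hypothesis to $T$ and to $T \cup \{v_i\}$ (or to $T\cap N(v_i)$ versus $T$) bounds $|e(v_i, T) - p|T||$ by $O(\gamma n^2 / n) = O(\gamma n)$. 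More carefully: $e(T\cup\{v_i\}) - e(T) = e(v_i, T\setminus\{v_i\})$, and $e(T\cup\{v_i\}) = \frac p2(|T|+1)^2 \pm \gamma n^2$, $e(T) = \frac p2|T|^2 \pm\gamma n^2$, wait that only gives $\pm 2\gamma n^2$ which is the wrong order. Need the hypothesis in the form that controls degree-into-a-set to within $O(\gamma n)$: indeed $\sum_{v_j \in T} d_{T\cup\{v_i\}}(v_j)$-type sums, or better, the quantity $e(v_i,T) = e(T\cup\{v_i\}) - e(T)$; taking $|T|$ possibly as large as $n$, $\gamma n^2$ error is bad.

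Hmm — so actually the right thing: sum the hybrid difference over $v_j$ first but keep $v_i$ and all of $v_1,\ldots,v_{i-1}$ summed too; then $\sum_{v_1,\ldots,v_{i-1}}\big(\text{factors}\big)\sum_{v_i\in T}\Big[\sum_{v_j\in T}(A(v_i,v_j)-p)\Big]$ — sum over both endpoints $v_i, v_j$ in $T$: this is $2\big(e(G[T]) - \frac p2 |T|^2\big) = \pm 2\gamma n^2$, and then the outer sum over $v_1,\ldots,v_{i-1}$ has at most $n^{i-1}$ terms each with a bounded product factor, giving total $\pm 2\gamma n^2 \cdot n^{i-1} \le 2\gamma n^{i+1}$. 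But we need $\gamma n^r$ and the tuple has $r$ free vertices; the remaining vertices $v_{i+1},\ldots$ (there are $r - i - 1$, wait $r-j$ since $j > i$; hmm if $j$ is the last vertex then $r - j$ might be $0$... ) contribute a harmless factor $\le n^{r-j}$ and with $p$'s bounded by $1$. Counting: free vertices not among $\{v_1,\ldots,v_i, v_j\}$ — but wait $v_j$ is summed in $T \subseteq S$, $v_i$ summed in $T$, $v_1,\ldots,v_{i-1}$ free, and $v_{i+1},\ldots$ (excluding $v_j$) also free. Total error $\le 2\gamma n^2 \cdot n^{r-2} = 2\gamma n^r$. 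Summed over the $\binom r2$ hybrid steps gives $\le r^2 \gamma n^r$. That matches.

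So the plan in order: (1) Fix $S$, set up the hybrid/telescoping sum over the $\binom r2$ pairs ordered as "add vertex $j$, then process pairs $\{1,j\},\{2,j\},\ldots,\{j-1,j\}$ in order." (2) For a single step — pair $\{i,j\}$ with $j$ the most recently added vertex and $i < j$ — write the difference of consecutive hybrids and observe that the only active (genuine-adjacency) factors present are those among $v_1,\ldots,v_i,v_j$ already processed; group them so that $v_j$ lies in the common neighborhood $T := S \cap N(v_1)\cap\cdots\cap N(v_{i-1})$ and $v_i$ also lies in $T$, while the factor $A(v_i,v_j) - p$ is what we're splitting off, and all other factors are the constant $p$ or depend only on $v_1,\ldots,v_{i-1},v_{i+1},\ldots$ (never on $v_i$ or $v_j$). (3) Sum over $v_i, v_j \in T$ to get $2\big(e(G[T]) - \tfrac p2|T|^2\big)$, bounded by $2\gamma n^2$ via $\mathcal{P}^*_{2,p}(\gamma)$. (4) Sum trivially over the remaining $r-2$ free vertices (product of $p$'s bounded by $1$) to get error $\le 2\gamma n^r$ per step. (5) Sum over all $\binom r2 < r^2/2$ steps: total deviation $\le r^2 \gamma n^r$, and the $0$-th hybrid is exactly $p^{\binom r2}|S|^r$, which is what we wanted.

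The main obstacle — and the only real subtlety — is in step (2)–(3): choosing the pair-ordering and the grouping so that at each step the error reduces to a pure edge-discrepancy $e(G[T]) - \frac p2 |T|^2$ over a single subset $T$, rather than something like a degree-into-a-set quantity which $\mathcal P^*_{2,p}$ only controls to within $\pm 2\gamma n^2$ (too weak when $|T|$ is large). The resolution is that both endpoints of the pair being processed range over the same common-neighborhood set $T$, so the inner double sum is symmetric and equals $2e(G[T]) - p|T|^2$; this is exactly the edge-discrepancy of the induced subgraph on $T$ and is directly bounded by the hypothesis. One should also note that common neighborhoods of earlier-chosen vertices are arbitrary subsets of $V(G)$, so the hereditary nature of $\mathcal P^*_{2,p}$ — that the edge bound holds for \emph{every} subset — is exactly what makes the argument go through; the non-hereditary $\mathcal P_{2,p}$ would not suffice.
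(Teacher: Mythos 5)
Your overall plan — telescope over the $\binom r2$ edge slots, at each step split off a factor $1_G(v_i,v_j)-p$, bound the resulting inner sum by $O(\gamma n^2)$ and the trivial outer sum over the remaining $r-2$ vertices by $n^{r-2}$ — is exactly the paper's argument. But there is a genuine gap in your step (2)--(3): you claim that at the telescoping step for pair $\{i,j\}$, \emph{both} $v_i$ and $v_j$ range over the same common-neighborhood set $T=S\cap N(v_1)\cap\cdots\cap N(v_{i-1})$, so that the inner double sum is the same-set discrepancy $e(T,T)-p|T|^2$, directly controlled by $\mathcal P^*_{2,p}(\gamma)$. That identification is wrong. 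With your pair ordering, when $\{i,j\}$ is processed (with $j>i+1$) the active factors include $1_G(v_i,v_k)$ for every $i<k<j$, which constrain $v_i$ to the strictly smaller set $T_i=T\cap N(v_{i+1})\cap\cdots\cap N(v_{j-1})$. (Concretely, for $r=3$ and the step $\{1,3\}$: the active factor $1_G(v_1,v_2)$ forces $v_1\in S\cap N(v_2)$, while $v_3$ ranges over all of $S$; these are not the same set.) So the inner sum is the \emph{bipartite} quantity $e(T_i,T)-p|T_i||T|$ with $T_i\subsetneq T$, not an induced-subgraph edge discrepancy, and the hypothesis $\mathcal P^*_{2,p}(\gamma)$ does not bound it directly.

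The missing ingredient is precisely the bilinear discrepancy bound that the paper proves as its key step: from $e(S,S)=p|S|^2\pm\gamma n^2$ for all $S$, deduce via the identity
\[2e(S,T)=e(S\cup T,S\cup T)+e(S\cap T,S\cap T)-e(S\setminus T,S\setminus T)-e(T\setminus S,T\setminus S)\]
that $e(S,T)=p|S||T|\pm 2\gamma n^2$ for all $S,T$, and hence $\bigl|\sum_{x,y}(1_G(x,y)-p)u(x)v(y)\bigr|\le 2\gamma n^2$ for all $u,v:V(G)\to[0,1]$. With this in hand, every telescoping term (for any pair ordering) is at most $2\gamma n^2\cdot n^{r-2}$, and $\binom r2\cdot 2\gamma n^r\le r^2\gamma n^r$ finishes the proof. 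It is worth noting that you do consider a bipartite-type quantity early in your reasoning (bounding $e(v_i,T)-p|T|$), obtain $\pm 2\gamma n^2$, and then discard it as ``the wrong order''; it is the wrong order for that single-sum quantity, but $\pm 2\gamma n^2$ is exactly the right order for the double sum over $v_i\in T_i$ and $v_j\in T$, which is what you actually need. So the fix is small, but as written the proposal asserts a false identity for the inner sum and never establishes the bipartite bound that the argument requires.
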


\begin{proof}
Given $S, T \subseteq V(G)$, write $e(S, T) = \sum_{s \in S, t \in T} 1_G(s,t)$, where $1_G$ is the indicator function for edges of $G$. In particular, when $S = T$, this counts the number of labeled edges in $S$. By assumption, $e(S, S) = p |S|^2 \pm \gamma n^2$ for all $S \subseteq V(G)$. Therefore, using the identity 
\[2e(S, T) = e(S\cup T, S \cup T) + e(S\cap T, S \cap T) - e(S \setminus T, S \setminus T) - e(T \setminus S, T \setminus S),\]
we see that $e(S, T) = p |S||T| \pm 2 \gamma n^2$ for all $S, T \subseteq V(G)$. Rewriting this conclusion, we see that
\[|\sum_{s \in S, t \in T} (1_G(s,t) - p)| \leq 2 \gamma n^2\]
for all $S, T \subseteq V(G)$.
In turn, this implies that for any functions $u, v: V(G) \rightarrow [0,1]$,
\[|\sum_{x, y \in V(G)} (1_G(x,y) - p) u(x) v(y)| \leq 2 \gamma n^2.\]
Indeed, since the function we wish to estimate is linear in $u(x)$ and $v(y)$ for each $x$ and $y$, the value of the function is maximised when $u$ and $v$ take values in $\{0,1\}$. In this case, $u$ and $v$ correspond to indicator functions, so the inequality reduces to the previous special case.

For ease of notation, we spell out the rest of the proof for the case of triangles. By telescoping, the deviation between the number of labeled triangles in a set $S \subseteq V(G)$ and its expected value is
\begin{align*}
\sum_{x, y, z \in S} (1_G(x,& y) 1_G(y,z) 1_G(z,x) - p^3) = \sum_{x,y,z \in S} (1_G(x,y) - p) 1_G(y,z) 1_G(z,x) + \\
& \sum_{x,y,z \in S} p (1_G(y,z) - p) 1_G(z,x) + \sum_{x,y,z \in S} p^2 (1_G(z,x) - p).
\end{align*}
Each term on the right-hand side of this equation may be written as a sum over terms of the form $\sum_{x, y \in V(G)} (1_G(x,y) - p) u(x) v(y)$ for some appropriate $u$ and $v$, thus implying that the deviation we are interested in is at most $6 \gamma n^3$. In general, we will be telescoping over $\binom{r}{2}$ terms, one for each edge in $K_r$, so the resulting deviation is $\binom{r}{2} 2 \gamma n^r \leq r^2 \gamma n^r$, as required.
\end{proof}

We will also use some simple ingredients from other papers. The first, taken from a paper of Erd\H{o}s, Goldberg, Pach and Spencer~\cite{EGPS}, says that if an $n$-vertex graph contains a set which deviates from the expected density, then there is also a set of order $n/2$ which deviates from this density.

\begin{lemma} \label{lem:EGPS}
Let $G$ be an $n$-vertex graph of density $q$. If there is a subset $S \subseteq V(G)$ for which $|e(S) - q\binom{|S|}{2}| \geq D$, then there exists a set $S'$ of order $n/2$ such that $|e(S') - q\binom{|S'|}{2}| \geq \left(\frac{1}{4}  + o(1)\right) D$.
\end{lemma}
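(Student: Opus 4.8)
The plan is to prove this by a short averaging argument over uniformly random subsets and supersets of size $n/2$, with no further structural input. Throughout, write $f(T) = e(T) - q\binom{|T|}{2}$ for $T \subseteq V(G)$ and $f(X,Y) = e(X,Y) - q|X||Y|$ for disjoint $X,Y$; since $q$ is the exact edge density we have $f(V(G)) = 0$, and by hypothesis $|f(S)| \ge D$. The basic computation is that if $T'$ is a uniformly random $m$-element subset of a set $T$ of size at least $m$, then $\mathbb{E}[f(T')] = \binom{m}{2}\binom{|T|}{2}^{-1} f(T)$, and, more generally, expectations of $f$ over random supersets split into an ``internal'' part and a ``cross'' part in the obvious way.

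The case $|S| \ge n/2$ is immediate: letting $T'$ be a uniformly random $(n/2)$-subset of $S$ we get $\mathbb{E}[f(T')] = \binom{n/2}{2}\binom{|S|}{2}^{-1} f(S)$, and since $n/2 \le |S| \le n$ the ratio is at least $\binom{n/2}{2}/\binom{n}{2} = \tfrac14 - o(1)$; hence some $(n/2)$-set $S'$ has $|f(S')| \ge \bigl(\tfrac14 - o(1)\bigr)D$. The main obstacle is the case $|S| < n/2$, where one must enlarge $S$: a uniformly random $(n/2)$-superset $S' = S \cup R$ of $S$, with $R$ a uniform $(n/2 - |S|)$-subset of $\bar S := V(G) \setminus S$, has $\mathbb{E}[f(S')] = (1-\beta)u + (\alpha - \beta)v$, where $u = f(S)$, $v = f(S,\bar S)$, $\alpha = \frac{n/2 - |S|}{n - |S|} \in (0,\tfrac12]$ and $\beta = \binom{n/2-|S|}{2}\binom{n-|S|}{2}^{-1}$, and the cross term $v$ is not controlled by the hypothesis.

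To eliminate $v$, I would run a second test in parallel: let $T$ be a uniformly random $(n/2)$-subset of $\bar S$ (which has size $> n/2$), so that $\mathbb{E}[f(T)] = \gamma w$ with $w = f(\bar S)$ and $\gamma = \binom{n/2}{2}\binom{n-|S|}{2}^{-1}$. Since $u + v + w = 0$, these two relations express $\mathbb{E}[f(S')]$ and $\mathbb{E}[f(T)]$ as linear functions of $u$ and $v$; solving for $u$ and using the estimates $\beta = (1-o(1))\alpha^2$ (valid when $\alpha$ is bounded away from $0$, the $\beta$ term being negligible otherwise) and $\gamma = (1-o(1))(1-\alpha)^2$, a routine calculation gives $|u| \le (1+o(1))(1-\alpha)^{-2}\max\{|\mathbb{E}[f(S')]|,\,|\mathbb{E}[f(T)]|\}$. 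As $\alpha \le \tfrac12$ we have $(1-\alpha)^2 \ge \tfrac14$, so one of $\mathbb{E}[f(S')]$, $\mathbb{E}[f(T)]$ has absolute value at least $\bigl(\tfrac14 - o(1)\bigr)|u| \ge \bigl(\tfrac14 - o(1)\bigr)D$; since $|\mathbb{E}[X]| \le \mathbb{E}[|X|]$, some $(n/2)$-set in the corresponding distribution — a superset of $S$, or a subset of $\bar S$ — has $|f|$ at least $\bigl(\tfrac14 - o(1)\bigr)D$, which is the claim. The only real work is the bookkeeping with the binomial ratios $\alpha,\beta,\gamma$ in the $|S|<n/2$ case; everything else is a one-line expectation computation.
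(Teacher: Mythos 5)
Note first that the paper does not actually prove this lemma: it is quoted from Erd\H{o}s--Goldberg--Pach--Spencer without a proof. So there is no in-paper proof to compare against; I have instead checked your argument directly, and it is correct. The sampling identity $\mathbb{E}[f(T')] = \binom{m}{2}\binom{|T|}{2}^{-1}f(T)$ for a uniform $m$-subset $T'\subseteq T$ is right, and the $|S|\ge n/2$ case follows immediately since $\binom{n/2}{2}/\binom{n}{2}\to 1/4$. For $|S|<n/2$, your decompositions $\mathbb{E}[f(S')]=u+\alpha v+\beta w=(1-\beta)u+(\alpha-\beta)v$ and $\mathbb{E}[f(T)]=\gamma w=-\gamma(u+v)$ are correct (using $u+v+w=f(V(G))=0$), and solving the $2\times 2$ system gives $u=\frac{1}{1-\alpha}x+\frac{\alpha-\beta}{(1-\alpha)\gamma}y$. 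An exact computation shows $\frac{\alpha-\beta}{(1-\alpha)\gamma}=\frac{kN}{(n/2)(n/2-1)}=\bigl(1+O(1/n)\bigr)\frac{\alpha}{(1-\alpha)^2}$ (with $k=n/2-|S|$, $N=n-|S|$), so the combined coefficient is $\bigl(1+o(1)\bigr)\bigl(\frac{1}{1-\alpha}+\frac{\alpha}{(1-\alpha)^2}\bigr)=\bigl(1+o(1)\bigr)(1-\alpha)^{-2}\le 4+o(1)$ since $\alpha\in(0,1/2]$; all the $o(1)$'s are uniform in $|S|$ because $N\ge n/2$. The two-sample trick to cancel the uncontrolled cross term $f(S,\bar S)$ is the genuinely clever step, and it is sound; the rest is bookkeeping, as you say.

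One small presentational remark: it would slightly streamline the $|S|<n/2$ case to note directly that $\alpha-\beta=\frac{k(N-k)}{N(N-1)}$ and $(1-\alpha)\gamma=\frac{(N-k)(n/2)(n/2-1)}{N^2(N-1)}$, making the ratio collapse to $\frac{kN}{(n/2)(n/2-1)}$ without any case split on whether $\alpha$ is bounded away from $0$.
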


We also need a special case of the Kruskal--Katona theorem~\cite{K68, Kr63} giving an upper bound for the number of $r$-cliques in a graph with given density. The result we use is given as Exercise 31b in Chapter 13 of Lov\'asz' problem book~\cite{Lo07}. Here the binomial coefficient $\binom{x}{r}$ is extended to all real $x$ in the obvious way.

\begin{lemma} \label{lem:KK}
Let $r \geq 3$ be an integer and $x \geq r$ a real number. Then a graph with exactly $\binom{x}{2}$ edges contains at most $\binom{x}{r}$ cliques of order $r$.
\end{lemma}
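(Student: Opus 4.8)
The plan is to derive this directly from the Kruskal--Katona theorem, in the form due to Lov\'asz that allows a real parameter: if $\mathcal{F}$ is a family of $k$-element sets with $|\mathcal{F}| = \binom{y}{k}$ for some real $y \geq k$, then the shadow $\partial\mathcal{F}$, consisting of all $(k-1)$-element sets contained in some member of $\mathcal{F}$, satisfies $|\partial\mathcal{F}| \geq \binom{y}{k-1}$. This is exactly the ``real'' version of Kruskal--Katona (see~\cite{Lo07}), so the lemma should fall out after translating cliques and edges into a statement about set families and their shadows.

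Concretely, I would let $\mathcal{F}$ be the family of vertex sets of $r$-cliques of $G$, viewed as $r$-element subsets of $V(G)$; the quantity to be bounded is $|\mathcal{F}|$. If $\mathcal{F}=\emptyset$ there is nothing to prove, so assume $|\mathcal{F}|\geq 1$ and write $|\mathcal{F}| = \binom{y}{r}$ for the unique real $y \geq r$ with this value, which exists because $t \mapsto \binom{t}{r}$ is a continuous strictly increasing bijection from $[r,\infty)$ onto $[1,\infty)$. Now iterate the shadow bound $r-2$ times, descending from $r$-sets to $2$-sets: at a generic stage one has a family $\mathcal{F}'$ of $j$-sets with $|\mathcal{F}'|\geq\binom{y}{j}$, and since $y\geq r\geq j$ one may write $|\mathcal{F}'| = \binom{z}{j}$ with $z\geq y$, apply Kruskal--Katona to get $|\partial\mathcal{F}'|\geq\binom{z}{j-1}\geq\binom{y}{j-1}$, and continue. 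After $r-2$ steps this gives $|\partial^{\,r-2}\mathcal{F}|\geq\binom{y}{2}$. The observation that links this back to $G$ is that every $2$-element subset of an $r$-clique is an edge of $G$, so $\partial^{\,r-2}\mathcal{F}\subseteq E(G)$ and hence $\binom{y}{2}\leq|\partial^{\,r-2}\mathcal{F}|\leq e(G)=\binom{x}{2}$. As $t\mapsto\binom{t}{2}$ is increasing on $[r,\infty)$, this forces $y\leq x$, and therefore $|\mathcal{F}| = \binom{y}{r}\leq\binom{x}{r}$, as required.

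Since the argument is essentially a black-box application of Kruskal--Katona together with the trivial fact that pairs of vertices inside a clique are edges, I do not expect any genuine obstacle. The only two points that warrant a sentence of care are the legitimacy of iterating the shadow inequality --- handled by the monotonicity remark, which lets us rewrite each intermediate inequality $|\mathcal{F}'|\geq\binom{y}{j}$ as an equality $|\mathcal{F}'|=\binom{z}{j}$ with $z\geq y$ before reapplying the theorem --- and the boundary bookkeeping ensuring $y\geq r$ throughout, so that every binomial coefficient appearing stays in the range where $t\mapsto\binom{t}{j}$ is increasing, all the way down to the level of edges.
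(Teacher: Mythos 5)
The paper states Lemma~\ref{lem:KK} as a citation (Exercise~31b in Chapter~13 of Lov\'asz's problem book) and supplies no proof of its own, so there is no in-paper argument to compare against. Your derivation from the real-parameter Kruskal--Katona shadow theorem --- iterating $\partial$ from the family of $r$-clique vertex sets down to its $2$-shadow, which is contained in $E(G)$, rewriting each intermediate count $\geq \binom{y}{j}$ as $=\binom{z}{j}$ with $z\geq y$ before reapplying the theorem, and closing with monotonicity of $t\mapsto\binom{t}{2}$ on $[r,\infty)$ to deduce $y\leq x$ and hence $\binom{y}{r}\leq\binom{x}{r}$ --- is correct and is precisely the argument the cited exercise calls for.
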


Finally, we need the standard Azuma--Hoeffding inequality, which we apply in the following form (see Corollary 2.27 in~\cite{JLR}).

\begin{lemma} \label{Azuma}
Given positive real numbers $\lambda, c_1, \dots, c_k$, let $f: \{0,1\}^k \rightarrow \mathbb{R}$ be a function satisfying the following Lipschitz condition: whenever two vectors $z, z' \in \{0,1\}^k$ differ only in the $i$th coordinate, $|f(z) - f(z')| \leq c_i$. Then, if $X_1, \dots, X_k$ are independent random variables, each taking values in $\{0,1\}$, the random variable $Y = f(X_1, \dots, X_k)$ satisfies
\[\mathbb{P}[|Y - \mathbb{E}[Y]| \geq \lambda] \leq 2 \exp\left\{- \frac{\lambda^2}{2\sum_i c_i^2}\right\}.\] 
\end{lemma}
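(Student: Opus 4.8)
The plan is to bound the number of $r$-cliques in an $n$-vertex graph $G$ of density $q$ not by the global Kruskal--Katona bound of Lemma~\ref{lem:KK}, but by a refined estimate that ``sees'' the degree sequence, and then combine this with Lemma~\ref{degrees} (which says almost all vertices have nearly equal $(r-1)$-st powers of degrees) and Lemma~\ref{lem:EGPS} to force $q$ to be close to $p$ on every large subset. Since the statement quoted is Lemma~\ref{Azuma}, I will instead describe the proof I would give for that lemma as used in the sequel; but let me first say how Lemma~\ref{Azuma} (Azuma--Hoeffding in Lipschitz form) itself is proved, since that is the final displayed statement. The standard argument is a martingale one: set $Y_i = \mathbb{E}[Y \mid X_1,\dots,X_i]$, so that $Y_0 = \mathbb{E}[Y]$ and $Y_k = Y$, and $(Y_i)$ is a Doob martingale with respect to the filtration generated by the $X_j$. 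The Lipschitz hypothesis gives that the martingale difference $Y_i - Y_{i-1}$ is supported (conditionally on $X_1,\dots,X_{i-1}$) in an interval of length at most $c_i$, because changing only the $i$th coordinate changes $f$ by at most $c_i$ and $Y_i - Y_{i-1}$ is an average of such changes against the (independent) law of $X_i$ minus its mean. One then applies Hoeffding's lemma, $\mathbb{E}[e^{t Z}\mid \mathcal{F}] \le e^{t^2 c_i^2/8}$ for a mean-zero variable $Z$ in an interval of length $c_i$, telescoping the conditional moment generating functions along the filtration to get $\mathbb{E}[e^{t(Y_k - Y_0)}] \le \exp\{t^2 \sum_i c_i^2 / 8\}$.

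With that MGF bound in hand, the tail bound is the usual Chernoff/Markov optimization: $\mathbb{P}[Y - \mathbb{E}[Y] \ge \lambda] \le e^{-t\lambda}\mathbb{E}[e^{t(Y-\mathbb{E}Y)}] \le \exp\{-t\lambda + t^2\sum_i c_i^2/8\}$, and choosing $t = 4\lambda / \sum_i c_i^2$ yields $\exp\{-2\lambda^2/\sum_i c_i^2\}$. A symmetric argument applied to $-Y$ bounds the lower tail, and a union bound over the two tails produces the factor of $2$ and the exponent $-\lambda^2 / (2\sum_i c_i^2)$ stated. (The apparent discrepancy between $2\lambda^2/\sum c_i^2$ and $\lambda^2/(2\sum c_i^2)$ is just the familiar slack in how Hoeffding's lemma is quoted; either constant is fine for the applications and the weaker one is stated here.) None of this requires anything beyond the independence of the $X_i$ and boundedness of the increments, so there is no real obstacle — the only care needed is to verify that conditioning preserves the interval-length bound on the increments, which is exactly where the Lipschitz hypothesis is used.

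If instead the intended ``final statement'' is the first substantive lemma of the clique argument for which Azuma is invoked — namely a concentration statement for clique counts in a random vertex subset — then the approach is: define $f$ on $\{0,1\}^n$ to be the number of $r$-cliques in the subset indicated by the coordinates; changing one coordinate changes $f$ by at most $c_{U}(u) = O(p^{\binom{r}{2}} n^{r-1})$ for the relevant vertex $u$, using Corollary~\ref{vertex-discrepancy} to control all but a negligible fraction of the $c_i$, and also using that the maximum degree cannot be too large (otherwise Lemma~\ref{degrees} is violated). Then $\sum_i c_i^2 = O(p^{2\binom{r}{2}} n^{2r-1})$, so Lemma~\ref{Azuma} gives concentration of the clique count in a $q$-random subset within $O(p^{\binom{r}{2}} n^{r - 1/2} \sqrt{\log n}) = o(\delta n^r)$ of its mean, which is what lets one transfer the hereditary $r$-clique hypothesis to the hypothesis of Lemma~\ref{two-sets} applied to random, rather than arbitrary, subsets. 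The main obstacle in that direction is bounding the \emph{large} Lipschitz constants $c_i$: a few vertices of very high clique-degree could in principle ruin the sum of squares, so one must first argue, via Corollary~\ref{vertex-discrepancy} and Lemma~\ref{degrees}, that such vertices are rare and then handle them by a separate deterministic bound rather than through the martingale.
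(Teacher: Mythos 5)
The paper does not actually prove Lemma~\ref{Azuma}; it is quoted verbatim as Corollary~2.27 of~\cite{JLR}, so there is no ``paper's proof'' to compare against. The martingale argument you give in your middle two paragraphs --- form the Doob martingale $Y_i = \mathbb{E}[Y \mid X_1,\ldots,X_i]$, use independence and the Lipschitz hypothesis to confine each increment $Y_i - Y_{i-1}$ to a conditional interval of length at most $c_i$, apply Hoeffding's lemma to the conditional moment generating functions, telescope, optimize the Chernoff parameter, and union-bound over the two tails --- is exactly the standard proof of this inequality, and it is correct. Your remark about the constant is also right: tracking interval lengths carefully actually yields the sharper exponent $-2\lambda^2/\sum_i c_i^2$ per tail; the weaker $-\lambda^2/(2\sum_i c_i^2)$ in the statement is simply the form in which \cite{JLR} states the result (using the bound $|Y_i - Y_{i-1}| \le c_i$ rather than the half-width bound), and either is more than adequate for the application in Section~\ref{sec:complete}. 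The first and last paragraphs of your proposal, concerning Kruskal--Katona, Lemma~\ref{degrees}, and how Azuma is invoked in the clique-counting argument, are tangential to proving the quoted statement and should be set aside.
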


We are now ready to prove Theorem~\ref{complete-graphs}. We will do this by showing that if a sufficiently large graph $G$ satisfies
${\cal P}^*_{r,p}(\delta)$, then it also satisfies ${\cal P}^*_{2,p}(\epsilon)$ with $\epsilon =O(p^{-2 r^2} \delta)$.

\vspace{3mm}

\noindent
{\bf Proof of Theorem \ref{complete-graphs}:}\, Let $G$ be an $n$-vertex graph satisfying the assertion of the theorem and let 
$q=e(G)/{n \choose 2}$ be the edge density of $G$. By the Kruskal--Katona theorem, Lemma~\ref{lem:KK}, $G$ contains $O(q^{r/2} n^r)$ labeled $r$-cliques. Since the number of labeled $r$-cliques in $G$ is also at least $\frac{1}{2} p^{\binom{r}{2}} n^r$, this implies that $q = \Omega(p^{r-1})$.

Let $\gamma =c q^{-(r-2)} p^{-{r \choose 2}} \delta$ for some constant $c$ which we choose later. If $G$ does not satisfy ${\cal P}^*_{2,q}(\gamma)$, then it contains a subset of vertices $S$ such that $e(S)$ deviates from $q{|S| \choose 2}$ by at least $\gamma n^2$. Using Lemma~\ref{lem:EGPS}, we can assume that $S$ has order $n/2$, allowing for the possibility that $\gamma$ may change by a small constant factor. 

Let $A$ be a random subset of $G$, obtained by choosing every vertex independently with probability $1/2$. Let $X=S \cap A$ and let $Y$ be a random subset of $G \setminus A$, obtained by further choosing every vertex with probability $1/2$. By linearity, the expected number of vertices in $X$ is $n/4$
and the expected number of edges in $X$ is $e(S)/4$. Moreover, both of these quantities are highly concentrated by the
Azuma--Hoeffding inequality. Indeed, changing the choice for one vertex can change the number of vertices in $X$ by at most one and the number of edges by at most $n$, so the sum of squares of these changes is at most $O(n)$ for the number of vertices and $O(n^3)$ for the number of edges.
These are much smaller than the square of the corresponding expectations.
Similarly, the expected number of vertices and edges in $Y$ are $n/4$ and 
$e(G)/16$, respectively, and they are also both concentrated. Therefore, we can find two disjoint subsets $X$ and $Y$, each of order
$n/4+o(n)$, such that $e(X)=e(S)/4+o(n^2)$ and $e(Y)=e(G)/16+o(n^2)$. Thus, by the discussion above, we have $\big| e(X)-e(Y)\big| = \Omega(\gamma n^2)$ and we can delete $o(n)$ vertices to make the orders of $X$ and $Y$ equal without changing this inequality. 
 
Let $U=X \cup Y$ and let $H=G[U]$ be the subgraph of $G$ induced by $U$. Without loss of generality, we will assume that $e(X) \geq e(Y)$. 
In particular, the edge density in $X$ is at least $q$. 
Since 
$\sum_{x \in X} d_H(x)=2e(X)+e(X,Y)$  and
$\sum_{y \in Y} d_H(y)=2e(Y)+e(X,Y)$, we have 
$\sum_{x \in X} d_H(x)-\sum_{y \in Y} d_H(y) =  \Omega(\gamma n^2)$.
Therefore, using Corollary \ref{inequality2} (with $s=r-1$) and $|X|=n/4+o(n)$, we deduce that
\begin{eqnarray*}
\sum_{u,v \in H} \big|d_H^{r-1}(u)-d_H^{r-1}(v)\big| &\geq& \sum_{x \in X, y \in Y} \big|d_H^{r-1}(x)-d_H^{r-1}(y)\big|
\geq \sum_{x \in X} d_H^{r-2}(x) \left(\sum_{x \in X} d_H(x)- \sum_{y \in Y} d_H(y) \right)\\
&\geq& |X| \left( \sum_{x \in X} d_H(x)/|X| \right)^{r-2}  \Omega(\gamma n^2)=\Omega\big(q^{r-2} \gamma n^{r+1}\big).
\end{eqnarray*}
For a sufficiently large constant $c$ (in the definition of $\gamma$), this contradicts Lemma \ref{degrees} and implies that 
$G$ satisfies ${\cal P}^*_{2,q}(\gamma)$. Finally, by Lemma~\ref{lem:counting}, we have that any graph satisfying
${\cal P}^*_{2,q}(\gamma)$ also satisfies ${\cal P}^*_{r,q}(r^2\gamma)$. Therefore, the number of labeled $r$-cliques in $G$ deviates from
$q^{r \choose 2}n^r$ by at most $r^2 \gamma n^r$.  On the other hand, the difference between
$(p \pm \epsilon)^{r \choose 2}$ and $p^{r \choose 2}$ has order of magnitude $\Omega\big(p^{{r \choose 2}-1}\epsilon\big)$. Thus, if $q$ differs from $p$ by $\epsilon=c' p^{-{r \choose 2}+1}\gamma$ for sufficiently large $c'$, we obtain the wrong count of $r$-cliques in $G$, contradicting ${\cal P}^*_{r,p}(\delta)$. Therefore, $G$ must satisfy ${\cal P}^*_{2,p}(\epsilon)$. Since $q = \Omega(p^{r-1})$, $\epsilon = c c' q^{-(r-2)} p^{-2\binom{r}{2} + 1} \delta = O\big(p^{-2 r^2} \delta\big)$, completing the proof.
\hfill $\Box$

\section{General graphs} \label{sec:general}

In this section, we prove Theorem~\ref{graphs-graphs}. We will assume throughout that $H$ does not have isolated vertices, as deleting such a vertex from $H$ simply scales the number of labeled copies in $S$ by a factor of $|S|-r+1$. 

We say that an $n$-vertex graph $G$ has property ${\cal Q}_{H,p}(\epsilon)$ if, for every $r$ disjoint subsets $V_1,\ldots,V_r \subseteq V(G)$, the number of labeled copies of $H$ with one vertex in each $V_i$ is $p^{m}r!\prod_{i=1}^r |V_i| \pm \epsilon n^r$. 
In other words, property ${\cal Q}_{H,p}(\epsilon)$ says that if we average over all possible permutations $\pi$ of $[r]$ the number of copies of $H$ with the copy of vertex $i$ in $V_{\pi(i)}$, the result is at most $\epsilon n^r/r!$ from $p^m\prod_{i=1}^r |V_i|$.

For a subset $U \subseteq V(G)$, let $N_H(U)$ denote the number of labeled copies of $H$ in $G$ whose vertices lie in $U$. Let $N_H(V_1,\ldots,V_r)$ denote the number of labeled copies of $H$ in $G$ with one vertex in each $V_i$. For $S \subseteq [r]$, let $U_S=\bigcup_{i \in S} V_i$. By the inclusion-exclusion principle, we have 
$$N_H(V_1,\ldots,V_r) = \sum_{S \subseteq [r]} (-1)^{r-|S|} N_H(U_S).$$ 
If $G$ has property ${\cal P}^*_{H,p}(\epsilon)$, it follows that $N_H(U_S)$ is within $\epsilon n^r$ of $p^m|U_S|^r$.  Applying this to each of the $2^r - 1$ choices of $S$, we get that $N_H(V_1,\ldots,V_r)$ is within $(2^r-1)\epsilon n^r$ of $p^m r! \prod_{i=1}^r |V_i|$. Hence, we have the following lemma. 

\begin{lemma}\label{easy}
If $G$ satisfies ${\cal P}^*_{H,p}(\epsilon)$, then it also satisfies ${\cal Q}_{H,p}((2^r-1)\epsilon)$.
\end{lemma}

We remark that the property studied by Reiher and Schacht \cite{RS16} is a variant of ${\cal Q}_{H,p}$.  We say that a graph $G$ on $n$ vertices has property $\mathcal{R}_{H,p}(\epsilon)$ if for any $r$ disjoint vertex subsets $V_1,\ldots,V_r$ of $G$ and every one-to-one mapping $\pi:V(H) \rightarrow [r]$, the number of copies of $H$ where the image of $v$ is in $V_{\pi(v)}$ for each vertex $v$ of $H$ is within $\epsilon n^r$ of $p^m\prod_{i=1}^r |V_i|$. Note that $\mathcal{R}$ is a stronger property than $\mathcal{Q}$, since if a graph satisfies $\mathcal{R}_{H,p}(\epsilon)$, then it also satisfies $\mathcal{Q}_{H,p}(r! \epsilon)$. It remains an open problem to find a simple proof (i.e., without going through the methods developed here or through regularity methods) that shows the other direction, that  $\mathcal{Q}$ implies $\mathcal{R}$.

We say that a pair $(A,B)$ of vertex subsets of a graph $G$ is {\it lower-$(q,\epsilon)$-regular} if, for all  $A' \subseteq A$ and $B' \subseteq B$, 
$$e(A',B') \geq q|A'||B'|-\epsilon|A||B|.$$ 
That is, the density between all pairs of large subsets is at least $q$, up to an error depending on $\epsilon$. As in the proof of Lemma~\ref{lem:counting}, this is equivalent to saying that for all functions $u:A \rightarrow [0,1]$ and $v:B \rightarrow [0,1]$, 
$$\sum_{a \in A,b \in B} 1_G(a,b) u(a)v(b) \geq q\sum_{a \in A,b \in B} u(a)v(b) - \epsilon|A||B|.$$ 

Similarly, we say that $(A,B)$ is  {\it upper-$(q,\epsilon)$-regular} if, for all subsets $A' \subseteq A$ and $B' \subseteq B$, 
$$e(A',B') \leq q|A'||B'|+\epsilon|A||B|.$$ 
We note that if a pair of subsets is both lower-$(q,\epsilon)$-regular and upper-$(q,\epsilon)$-regular, it satisfies a notion of regularity introduced by Lov\'asz and Szegedy~\cite{LS} (though equivalent up to a polynomial change in $\epsilon$ to the original notion of regularity introduced by Szemer\'edi~\cite{Sz76}). 

The following counting lemma gives a lower bound for the number of copies of a graph $H$ between a collection of sets with lower-regular pairs. We omit the proof, which follows by the same telescoping argument used for Lemma~\ref{lem:counting}.

\begin{lemma}\label{countlemma}
Let $H$ be a graph on vertex set $\{1,2,\ldots,r\}$. Let $G$ be a graph with vertex subsets $V_1,\ldots,V_r$ such that $(V_i,V_j)$ is lower-$(p_{ij},\epsilon)$-regular for each edge $(i,j)$ of $H$. Then the number of homomorphisms from $H$ to $G$ with the copy of vertex $i$ in $V_i$ is at least $$\left(\prod_{(i,j) \in E(H)}p_{ij} -e(H)\epsilon\right)\prod_{i=1}^r |V_i|.$$
\end{lemma}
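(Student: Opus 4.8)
**The plan is to prove Lemma~\ref{countlemma} by the same telescoping/hybrid argument that established Lemma~\ref{lem:counting}.** First I would recall the reformulation of lower-$(q,\epsilon)$-regularity already noted in the excerpt: a pair $(V_i,V_j)$ is lower-$(p_{ij},\epsilon)$-regular precisely when, for all functions $u:V_i\to[0,1]$ and $v:V_j\to[0,1]$,
\[
\sum_{a\in V_i,\,b\in V_j} 1_G(a,b)\,u(a)\,v(b)\ \geq\ p_{ij}\sum_{a\in V_i,\,b\in V_j} u(a)\,v(b)-\epsilon|V_i||V_j|.
\]
The quantity we want to lower-bound, the number of homomorphisms from $H$ to $G$ with vertex $i$ landing in $V_i$, is
\[
\sum_{(x_1,\dots,x_r)\in V_1\times\cdots\times V_r}\ \prod_{(i,j)\in E(H)} 1_G(x_i,x_j),
\]
and the target main term is $\bigl(\prod_{(i,j)\in E(H)}p_{ij}\bigr)\prod_i|V_i|$, obtained by replacing each indicator $1_G(x_i,x_j)$ with the constant $p_{ij}$.

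The key step is to interpolate one edge at a time. Fix an ordering $e_1,e_2,\dots,e_m$ of $E(H)$ (with $m=e(H)$). For $0\le t\le m$, let
\[
W_t=\sum_{(x_1,\dots,x_r)}\ \Bigl(\prod_{s\le t}1_G\bigl(x_{i_s},x_{j_s}\bigr)\Bigr)\Bigl(\prod_{s>t}p_{i_sj_s}\Bigr),
\]
where $e_s=(i_s,j_s)$, so that $W_m$ is the homomorphism count and $W_0=\bigl(\prod_{s}p_{i_sj_s}\bigr)\prod_i|V_i|$ is the main term. I would then bound each difference $W_t-W_{t-1}$ from below by $-\epsilon\prod_i|V_i|$. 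For the $t$-th step, I factor the summand: summing first over the two coordinates $x_{i_t}$ and $x_{j_t}$ with the other coordinates fixed, the inner sum has the shape $\sum_{a\in V_{i_t},b\in V_{j_t}}\bigl(1_G(a,b)-p_{i_tj_t}\bigr)u(a)v(b)$, where $u(a)$ collects the product of all already-replaced indicators and $v(b)$ the product of the remaining $p$'s that involve $x_{j_t}$ and the fixed coordinates — and crucially both $u$ and $v$ take values in $[0,1]$ because each is a product of indicators and probabilities. (One must check that no previously processed edge and no future edge forces a factor depending on \emph{both} $x_{i_t}$ and $x_{j_t}$ other than $e_t$ itself; since $H$ is a simple graph, $e_t$ is the only edge between $i_t$ and $j_t$, so this is automatic.) By the reformulated lower-regularity of $(V_{i_t},V_{j_t})$ applied to this $u,v$, the inner sum is at least $-\epsilon|V_{i_t}||V_{j_t}|$; summing the remaining $r-2$ coordinates over their sets and bounding the leftover product of indicators/probabilities by $1$ gives $W_t-W_{t-1}\ge-\epsilon\prod_i|V_i|$. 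Telescoping over the $m$ steps yields $W_m-W_0\ge-m\epsilon\prod_i|V_i|$, i.e.
\[
\sum_{(x_1,\dots,x_r)}\ \prod_{(i,j)\in E(H)}1_G(x_i,x_j)\ \geq\ \Bigl(\prod_{(i,j)\in E(H)}p_{ij}-e(H)\epsilon\Bigr)\prod_{i=1}^r|V_i|,
\]
as required.

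The only mildly delicate point — and the place I would be most careful — is verifying the $[0,1]$-boundedness of the weight functions $u,v$ at each interpolation step and confirming that the inner two-variable sum really does isolate exactly one factor $1_G(x_{i_t},x_{j_t})-p_{i_tj_t}$, with everything else independent of the pair $(x_{i_t},x_{j_t})$; this is where simplicity of $H$ (no multi-edges, no loops) is used. Everything else is bookkeeping identical to the proof of Lemma~\ref{lem:counting}, which is why the paper omits it. Note also that we only obtain a homomorphism count (not a labeled-copy count), so injectivity of the maps is not claimed here; that is consistent with the lemma statement, which speaks of homomorphisms.
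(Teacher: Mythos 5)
Your proof is correct and matches the paper's intended approach: the paper explicitly omits the proof of this lemma, stating only that it "follows by the same telescoping argument used for Lemma~\ref{lem:counting}," which is precisely the one-edge-at-a-time interpolation you carry out, using the $[0,1]$-valued-function reformulation of lower-regularity. You also correctly identify the one point worth checking (that simplicity of $H$ ensures only the current edge $e_t$ couples $x_{i_t}$ and $x_{j_t}$) and that the inert factors lie in $[0,1]$, so the per-step error is at most $\epsilon\prod_i|V_i|$.
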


Note that a similar lemma also holds if lower is replaced by upper and $-$ by $+$. It is also worth noting that we have not insisted that the vertex sets $V_1, \dots, V_r$ be disjoint. In particular, we may take $V_1 = \dots = V_r$ to obtain a non-partite version of the lemma.

The next lemma shows that lower regularity implies upper regularity and vice versa.

\begin{lemma}\label{lowervsupper}
If a pair $(A,B)$ of vertex subsets of a graph is not lower-$(d(A,B),\epsilon)$-regular, then it is also not upper-$(d(A,B),\epsilon/2)$-regular. The same holds if lower and upper are switched.
\end{lemma}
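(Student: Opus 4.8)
The key observation is that $d(A,B)$, the edge density of the pair, is exactly the ``average'' density, so a violation of lower-regularity in one direction forces a violation of upper-regularity in the other. Write $q = d(A,B) = e(A,B)/(|A||B|)$. Suppose $(A,B)$ is not lower-$(q,\epsilon)$-regular, so there exist $A' \subseteq A$, $B' \subseteq B$ with
\[ e(A',B') < q|A'||B'| - \epsilon|A||B|. \]
The plan is to produce subsets witnessing a failure of upper-$(q,\epsilon/2)$-regularity by looking at the ``complementary'' edge counts. The natural first move is to consider the pair $(A', B \setminus B')$ together with $(A', B')$: since $e(A', B) = e(A',B') + e(A', B\setminus B')$, an unusually small count on $(A',B')$ must be compensated by an unusually large count somewhere among the other three ``quadrants'' determined by the partitions $A = A' \cup (A\setminus A')$ and $B = B' \cup (B\setminus B')$.

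More precisely, I would expand $e(A,B) = q|A||B|$ into the four terms $e(A',B') + e(A',B\setminus B') + e(A\setminus A', B') + e(A\setminus A', B\setminus B')$, and likewise expand $q|A||B| = q|A'||B'| + q|A'||B\setminus B'| + q|A\setminus A'||B'| + q|A\setminus A'||B\setminus B'|$. Subtracting, the four discrepancies $e(S,T) - q|S||T|$ over the four quadrants sum to zero. Since the discrepancy on $(A',B')$ is less than $-\epsilon|A||B|$, the sum of the discrepancies over the other three quadrants exceeds $\epsilon|A||B|$, so at least one of them exceeds $\frac{1}{3}\epsilon|A||B|$ — which already gives a failure of upper-$(q,\epsilon/3)$-regularity, not quite the claimed $\epsilon/2$. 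To get the sharper constant $\epsilon/2$, I would instead pair up the quadrants two at a time: consider $e(A', B) - q|A'||B| = [e(A',B') - q|A'||B'|] + [e(A',B\setminus B') - q|A'||B\setminus B'|]$ and $e(A\setminus A', B) - q|A\setminus A'||B|$, whose two discrepancies sum to zero; similarly along the other partition. By considering whether the ``mass'' that compensates the deficit on $(A',B')$ lies mostly in $(A', B\setminus B')$, in $(A\setminus A', B')$, or is split, one finds a single quadrant on which the discrepancy is at least $\frac{1}{2}\epsilon|A||B|$: indeed $e(A',B\setminus B') - q|A'||B\setminus B'| = -(e(A',B') - q|A'||B'|) - (e(A', \cdot))\ldots$ — tracking these signs carefully, the excess of at least $\epsilon|A||B|$ distributed over the three quadrants must put at least $\frac{1}{2}\epsilon|A||B|$ onto one of them once we note that $(A\setminus A', B\setminus B')$ and one of the off-diagonal blocks can be combined. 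The cleanest version: since $e(A', B\setminus B') - q|A'||B\setminus B'| \ge -\big(e(A',B') - q|A'||B'|\big) - 0 > \epsilon|A||B|$ would already do it if $e(A',B) = q|A'||B|$ exactly, which it need not be; so one takes the better of the two complementary pairs.

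The main obstacle is purely bookkeeping: getting the constant exactly $\epsilon/2$ rather than $\epsilon/3$ requires choosing the right one of the three ``other'' quadrants (or rather, observing that one of the two complementary decompositions $A = A'\cup(A\setminus A')$ or $B = B' \cup (B\setminus B')$ already yields a block of relative discrepancy $\ge \epsilon/2$). There is no real mathematical content beyond additivity of edge counts over a partition into blocks and the pigeonhole principle applied to two quantities summing to something of absolute value $> \epsilon|A||B|$; the symmetric statement (upper not implying lower) follows by the identical argument with all inequalities reversed. I would present this as a short direct computation, displaying the block decomposition of $e(A,B) - q|A||B| = 0$ and then invoking pigeonhole.
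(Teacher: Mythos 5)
Your exploration circles the paper's idea but never quite lands on it cleanly, and your final plan (``displaying the block decomposition of $e(A,B) - q|A||B| = 0$ and then invoking pigeonhole'') is ambiguous in a way that matters: the four-quadrant decomposition, which you correctly identify as giving only $\epsilon/3$, is the most natural reading, and the passage where you ``consider whether the mass lies in $(A',B\setminus B')$, in $(A\setminus A',B')$, or is split'' reverts to exactly that four-block picture. The block $(A\setminus A',B')$ is a red herring and should not appear.

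The decomposition that works is a \emph{three}-block partition of $A \times B$, namely
\[
A \times B \;=\; \bigl(A' \times B'\bigr) \,\sqcup\, \bigl(A' \times (B\setminus B')\bigr) \,\sqcup\, \bigl((A\setminus A') \times B\bigr),
\]
so that with $q = d(A,B)$,
\[
e(A',B') + e(A',B\setminus B') + e(A\setminus A',B) = e(A,B) = q|A||B| = q|A'||B'| + q|A'||B\setminus B'| + q|A\setminus A'||B|,
\]
hence the three discrepancies sum to zero. Since $e(A',B') - q|A'||B'| < -\epsilon|A||B|$, the remaining two discrepancies sum to more than $\epsilon|A||B|$, and pigeonhole on \emph{two} terms gives $\epsilon/2$. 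You do gesture at this (``$e(A\setminus A',B) - q|A\setminus A'||B|$, whose two discrepancies sum to zero'' and ``one takes the better of the two complementary pairs''), so the right idea is present, but it is buried among several false starts; the writeup you sketch would need to commit explicitly to the three-block partition above rather than the four-quadrant one. Once you do that, your argument coincides with the paper's.
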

\begin{proof}
By assumption, there are subsets $A' \subset A$ and $B' \subset B$ such that $e(A',B')-d(A,B)|A'||B'|<-\epsilon|A||B|$. As \begin{eqnarray*}e(A',B')+e(A \setminus A',B)+e(A',B \setminus B') & = & e(A,B)\\ & = & d(A, B) |A||B|\\ & = & d(A,B)|A'||B'|+d(A,B)|A \setminus A'||B|+d(A,B)|A'||B \setminus B'|,\end{eqnarray*} it follows that at least one of the pairs $(A \setminus A',B)$ and $(A',B \setminus B')$ demonstrates that $(A,B)$ is not upper-$(d(A,B),\epsilon/2)$-regular. The proof when lower and upper are switched is the same.
\end{proof}

We also need a simple lemma saying that we can always find a pair of subsets of equal size which bear witness to irregularity. 

\begin{lemma}\label{lowup}
Suppose $(A,B)$ is a pair of vertex subsets of a graph which is not upper-$(q,\gamma)$-regular. Then there are subsets $A' \subseteq A$ and $B' \subseteq B$ with $|A'|=|B'|$ and $d(A',B') \geq  q+\gamma\min\{\frac{|A|}{|A'|}, \frac{|B|}{|B'|}\}$. The same holds with upper replaced by lower, $+$ by $-$ and the inequality reversed. 
\end{lemma}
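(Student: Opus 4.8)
The plan is to start from a witnessing pair of (possibly unequal) subsets and trim the larger side by a greedy averaging argument that cannot lower the density. Since $(A,B)$ is not upper-$(q,\gamma)$-regular, fix $A_0 \subseteq A$ and $B_0 \subseteq B$ with $e(A_0,B_0) > q|A_0||B_0| + \gamma|A||B|$, equivalently $d(A_0,B_0) > q + \gamma\frac{|A||B|}{|A_0||B_0|}$. Assume without loss of generality that $|A_0| \le |B_0|$; the reverse case is symmetric, trimming $A_0$ instead of $B_0$.

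Next I would trim $B_0$ down to the size of $A_0$. Order the vertices of $B_0$ by the number of neighbours they have in $A_0$ and let $B' \subseteq B_0$ consist of the $|A_0|$ vertices with the largest such degrees; then the average degree into $A_0$ over $B'$ is at least the average over all of $B_0$, so $\frac{e(A_0,B')}{|B'|} \ge \frac{e(A_0,B_0)}{|B_0|}$ and hence $d(A_0,B') \ge d(A_0,B_0)$. Setting $A' = A_0$ gives $|A'| = |B'| = |A_0|$ and $d(A',B') > q + \gamma\frac{|A||B|}{|A_0||B_0|}$.

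Finally comes the bookkeeping: it remains to verify $\frac{|A||B|}{|A_0||B_0|} \ge \min\{\frac{|A|}{|A'|},\frac{|B|}{|B'|}\}$. Since $B_0 \subseteq B$ we have $\frac{|B|}{|B_0|} \ge 1$, so $\frac{|A||B|}{|A_0||B_0|} = \frac{|A|}{|A_0|}\cdot\frac{|B|}{|B_0|} \ge \frac{|A|}{|A_0|} \ge \frac{\min\{|A|,|B|\}}{|A_0|} = \min\{\frac{|A|}{|A'|},\frac{|B|}{|B'|}\}$, as required. The lower-regularity version is proved the same way, beginning from $A_0, B_0$ with $d(A_0,B_0) < q - \gamma\frac{|A||B|}{|A_0||B_0|}$ and, when trimming the larger set, keeping instead the vertices of \emph{smallest} degree into the other set, so that the density can only decrease; the same display then gives the claimed bound with the inequality reversed.

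I do not expect any real obstacle here. The one point that needs care is that the right-hand side of the conclusion is $\min\{|A|/|A'|,|B|/|B'|\}$ rather than the stronger-looking $|A||B|/(|A'||B'|)$: the argument only enlarges the \emph{smaller} of the two witnessing sets back up to the common size, so it is the trivial bound $|B_0| \le |B|$ (and not $|A_0| \le |A|$ as well) that is used in the last display, and one should resist the temptation to claim more.
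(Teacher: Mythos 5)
Your proposal is correct and essentially identical to the paper's proof: both take a witnessing pair $(A_0,B_0)$, assume WLOG $|A_0|\leq|B_0|$, keep the $|A_0|$ vertices of $B_0$ with the most neighbours in $A_0$ so the density cannot drop, and then bound $\gamma|A||B|/(|A_0||B_0|)$ from below by $\gamma|A|/|A_0|$. Your closing remark about why only $|B_0|\leq|B|$ is needed is a nice clarification of a point the paper leaves implicit, but the argument is the same.
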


\begin{proof}
As $(A,B)$ is not upper-$(q,\gamma)$-regular, there are subsets $A_0 \subseteq A$ and $B_0 \subseteq B$ such that 
$e(A_0,B_0) \geq q|A_0||B_0|+\gamma|A||B|$. Without loss of generality, we may assume that $|A_0| \leq |B_0|$. Let $B' \subseteq B_0$ be the subset of $|A_0|$ vertices with the most neighbors in $A_0$ and $A'=A_0$. Then 
$$d(A',B') \geq d(A_0,B_0) = \frac{e(A_0,B_0)}{|A_0||B_0|} \geq q+\gamma\frac{|A||B|}{|A_0||B_0|} \geq q+\gamma\frac{|A|}{|A'|},$$
as required. 
\end{proof}

The next lemma shows that if a graph satisfies ${\cal Q}_{H,p}$ but there are sets $A$ and $B$ for which the density $d(A, B)$ deviates significantly from the expected density, then there are sets $A'$ and $B'$ such that the density $d(A', B')$ deviates by even more. This observation will allow us to run a density-increment argument in the proof of Theorem~\ref{graphs-graphs}.

\begin{lemma} \label{keylem} Let $H$ be a graph with $r$ vertices and $m$ edges. Suppose $G$ is a graph on $n$ vertices that satisfies ${\cal Q}_{H,p}(\delta)$ and has disjoint subsets $A$ and $B$ with $|A|=|B|$ such that $\delta \leq \frac{1}{4r}\alpha mp^m r^{-r} (|A|/n)^r$ and $d(A,B) \geq (1+\alpha)p$ or $d(A,B) \leq (1-\alpha)p$ with $\alpha \leq \frac{1}{16mr}$. Then there are also disjoint subsets $A'$ and $B'$ with $|A'|=|B'|$ and $d(A',B') \geq (1+(1+\beta)\alpha)p$ or $d(A',B') \leq (1-(1+\beta)\alpha)p$ where  $\beta = \frac{p^{m-1}}{4r^3}\frac{|A|}{|A'|}$.  
\end{lemma}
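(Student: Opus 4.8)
The plan is a density‑increment dichotomy on the pair $(A,B)$. Assume throughout that $d(A,B)\ge(1+\alpha)p$; the case $d(A,B)\le(1-\alpha)p$ is symmetric, with all inequalities and all ``upper''/``lower'' qualifiers reversed and Lemma~\ref{countlemma} used in its upper form. Put $\gamma:=\alpha p^{m}/(4r^{3})$. If $(A,B)$ is \emph{not} upper-$((1+\alpha)p,\gamma)$-regular, then Lemma~\ref{lowup} produces disjoint $A'\subseteq A$ and $B'\subseteq B$ with $|A'|=|B'|$ and $d(A',B')\ge(1+\alpha)p+\gamma\,|A|/|A'|$; since $\gamma\,|A|/|A'|=\alpha p\cdot\frac{p^{m-1}}{4r^{3}}\frac{|A|}{|A'|}=\alpha\beta p$, this is exactly the asserted conclusion and we are done. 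Hence we may assume $(A,B)$ is upper-$((1+\alpha)p,\gamma)$-regular; as $d(A,B)\ge(1+\alpha)p$ it is then also upper-$(d(A,B),\gamma)$-regular, so by Lemma~\ref{lowervsupper} it is lower-$(d(A,B),2\gamma)$-regular, and a fortiori lower-$((1+\alpha)p,2\gamma)$-regular. It remains to show that this situation contradicts ${\cal Q}_{H,p}(\delta)$, so that it cannot occur.

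To get the contradiction I over‑count copies of $H$. Set $V_{1}:=A$ and let $V_{2},\dots,V_{r}$ be an equipartition of $B$ into $r-1$ parts. Since $H$ has no isolated vertex, under \emph{every} bijection $\pi$ of $V(H)$ onto $\{V_{1},\dots,V_{r}\}$ the $H$‑vertex $w_{\pi}$ placed in $V_{1}=A$ has a neighbour, placed in some $V_{i}\subseteq B$; thus $\deg_{H}(w_{\pi})\ge1$ edges of $H$ lie on sub‑pairs of $(A,B)$ — which have density $\ge(1+\alpha)p$ and, as $V_{1}=A$ is kept whole, are lower-$((1+\alpha)p,2(r-1)\gamma)$-regular — while all remaining edges lie on sub‑pairs of $(B,B)$. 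Suppose for a moment that $(B,B)$ is internally lower-$((1-\eta)p,\epsilon_{0})$-regular (on functions $B\to[0,1]$) with $\eta:=\frac{\alpha}{r-2}$ and $\epsilon_{0}$ a sufficiently small quantity; then each $(V_{i},V_{j})$ with $i,j\ge2$ is lower-$((1-\eta)p,(r-1)^{2}\epsilon_{0})$-regular, and Lemma~\ref{countlemma} gives, for each $\pi$, at least $\bigl((1+\alpha)^{\deg_{H}(w_{\pi})}(1-\eta)^{m-\deg_{H}(w_{\pi})}p^{m}-m\epsilon_{1}\bigr)\prod_{i}|V_{i}|$ homomorphisms of $H$ in position $\pi$, where $\epsilon_{1}=\max\{2(r-1)\gamma,(r-1)^{2}\epsilon_{0}\}$ is negligible. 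Summing over the $r!$ bijections (each $H$‑vertex is $w_{\pi}$ for exactly $(r-1)!$ of them) and discarding the lower‑order count of non‑injective homomorphisms, and using $\sum_{v\in V(H)}\deg_{H}(v)=2m$ together with $\alpha\le\frac1{16mr}$ to see that $\sum_{v}(1+\alpha)^{\deg_{H}(v)}(1-\eta)^{m-\deg_{H}(v)}\ge r+\tfrac{m\alpha}{2}$, one obtains $N_{H}(V_{1},\dots,V_{r})\ge p^{m}r!\prod_{i}|V_{i}|+\tfrac{m\alpha}{3}(r-1)!\,p^{m}\prod_{i}|V_{i}|$. As $\prod_{i}|V_{i}|=|A|^{r}/(r-1)^{r-1}$, the hypothesis $\delta\le\frac{1}{4r}\alpha m p^{m}r^{-r}(|A|/n)^{r}$ makes the second term exceed $\delta n^{r}$, contradicting ${\cal Q}_{H,p}(\delta)$.

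Finally one must justify the reduction to the case where $(B,B)$ is internally lower-$((1-\eta)p,\epsilon_{0})$-regular. If it is not, then — reducing the witnessing functions to indicator sets, separating them into disjoint sets via inclusion–exclusion, equalising their sizes by the ``lower'' form of Lemma~\ref{lowup}, and inflating a small witness through Lemma~\ref{lem:EGPS} applied inside $B$ — one aims to extract disjoint equal‑sized subsets of $B$ of density at most $(1-(1+\beta)\alpha)p$, which then serve as the output witness (or else $d(B)$ itself is bounded away from $p$ and one takes for $A',B'$ the two halves of a random bipartition of $B$). This is the step I expect to be the main obstacle: the counting step needs every non‑boosted edge of $H$ to sit on a pair only $O(\alpha/r)$‑far from density $p$, whereas the lemma must hand back a pair $(1+\beta)\alpha$‑far from $p$, so the thresholds $\eta$ and $\epsilon_{0}$ and the sizes of the extracted sets must be balanced carefully — in particular any ``concentrated'' irregularity has to be pushed through Lemma~\ref{lowup} to amplify its deficit before it is returned — so that every way in which ``$(B,B)$ looks $p$‑uniform'' can fail already yields a witness at least as strong as the one required. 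Granting this, everything else is routine telescoping (as in the proof of Lemma~\ref{lem:counting}) together with Azuma‑type concentration for the random bipartitions.
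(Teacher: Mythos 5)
Your first case is correct and coincides with the paper's: if $(A,B)$ fails upper-$((1+\alpha)p,\gamma)$-regularity with $\gamma=\alpha p^m/(4r^3)$, Lemma~\ref{lowup} directly returns disjoint equal-sized $A'\subseteq A$, $B'\subseteq B$ with $d(A',B')\geq(1+\alpha)p+\gamma\frac{|A|}{|A'|}=(1+(1+\beta)\alpha)p$. The paper reaches the same conclusion (routed through Lemma~\ref{lowervsupper}).

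The remainder has a genuine gap, and you correctly identify it yourself as ``the main obstacle.'' The root cause is your choice of partition. You keep $A$ whole as $V_1$ and equipartition only $B$ into $V_2,\dots,V_r$. Under a uniformly random bijection $\pi:V(H)\to[r]$, the expected number of ``cross'' edges of $H$ (one endpoint at the $H$-vertex placed in $A$) is $\mathbb{E}[\deg_H(w_\pi)]=2m/r$, while the expected number of ``within-$B$'' edges is $m(r-2)/r$. For $r\geq 5$ the within-edges \emph{dominate}, so a boost of $+\alpha p$ on cross-pairs cannot beat a slack of $-\alpha p$ on within-pairs. This forces you to take the much tighter tolerance $\eta=\alpha/(r-2)$ on within-$B$ pairs. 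But then, when lower-$((1-\eta)p,\epsilon_0)$-regularity of some within-$B$ pair fails, you hold only a deficit of order $\alpha p/r$, and you must return a witness with deficit at least $(1+\beta)\alpha p>\alpha p$ — a factor-$\Theta(r)$ amplification. Lemma~\ref{lowup} amplifies by the ratio $|V_i|/|A'|$, which you do not control; nothing guarantees it is as large as $\Theta(r)\cdot\alpha p/\upsilon'$. Patching with inclusion–exclusion, Lemma~\ref{lem:EGPS}, or random bipartitions of $B$, as you suggest, does not close this: Lemma~\ref{lem:EGPS} preserves (up to constants) the \emph{absolute} discrepancy, not the density deficit, and a random bipartition of $B$ only helps if $d(B)$ is already far from $p$, which need not be the case even when $(B,B)$ is highly irregular.

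The paper sidesteps the entire issue by partitioning both $A$ and $B$ — into $\lfloor r/2\rfloor$ and $\lceil r/2\rceil$ parts respectively — and averaging over all $r!$ bijections. With this symmetric split, the expected excess of cross-edges over within-edges is $\mathbb{E}[m_2-m_1]\geq m/r>0$, so the within-pair tolerance can be set to $q=(1-\alpha)p$ — the \emph{same} order as the output threshold. Then when any within-pair $(A_i,A_j)$ fails lower-$(q,\upsilon)$-regularity with $\upsilon=p^m\alpha/(4r)$, Lemma~\ref{lowup} applied to the already-disjoint $A_i,A_j$ (so no inclusion–exclusion is needed) yields $d(A',B')\leq q-\frac{\upsilon}{r}\frac{|A|}{|A'|}\leq(1-(1+\beta)\alpha)p$ at once, since $\frac{\upsilon}{r}\frac{|A|}{|A'|}\geq\beta\alpha p$ by design. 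In short, the symmetric partition makes the counting calculation and the witness-extraction calculation use the \emph{same} scale $\alpha$, which is the key structural idea your approach is missing.
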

\begin{proof}
Suppose that we are in the case where $d(A,B) \geq  (1+\alpha)p$.
Let $q=(1-\alpha)p$ and $\upsilon = p^m\alpha/(4r)$. Take an arbitrary equitable partition of $A$ into $\lfloor r/2 \rfloor$ subsets  $A_1,\ldots,A_{\lfloor r/2 \rfloor}$ and $B$ into $\lceil r/2 \rceil$ subsets $A_{\lfloor r/2 \rfloor+1},\ldots,A_r$. 

First suppose that there is a pair $(A_i,A_j)$ with $1 \leq i \leq \lfloor r/2 \rfloor < j \leq r$ which is not lower-$(d(A,B),\upsilon)$-regular. As $A_i \subset A$ and $A_j \subset B$, $(A,B)$ is not lower-$(d(A,B),\upsilon')$-regular, where $\upsilon'=\upsilon\frac{|A_i|||A_j|}{|A||B|} \geq \frac{2}{r^2}\upsilon$. By Lemma \ref{lowervsupper}, $(A,B)$ is also not upper-$(d(A,B),\upsilon'/2)$-regular. Therefore, by Lemma \ref{lowup}, there are subsets $A' \subseteq A$ and $B' \subseteq B$ such that $|A'|=|B'|$ and $d(A',B') \geq d(A,B)+\frac{\upsilon'}{2}\frac{|A|}{|A'|} \geq d(A,B)+\frac{\upsilon}{r^2}\frac{|A|}{|A'|}$. Hence, since $\frac{\upsilon}{r^2}\frac{|A|}{|A'|} = \alpha \beta p$, we may suppose all pairs $(A_i,A_j)$ with $1 \leq i \leq \lfloor r/2 \rfloor < j \leq r$ are lower-$(d(A,B),\upsilon)$-regular.

Now suppose there is a pair $(A_i,A_j)$ with $1 < i <  j \leq \lfloor r/2 \rfloor$ which is not lower-$(q,\upsilon)$-regular.  By Lemma \ref{lowup}, there are subsets $A' \subseteq A_i$ and $B' \subseteq A_j$ with $|A'|=|B'|$ and $d(A',B') \leq  q-\upsilon\min\{\frac{|A_i|}{|A'|},\frac{|A_j|}{|B'|}\} \leq q-\frac{\upsilon}{r}\frac{|A|}{|A'|} \leq (1-(1+\beta)\alpha)p$. Hence, we may suppose all pairs $(A_i,A_j)$ with $1 < i <  j \leq \lfloor r/2 \rfloor$ are lower-$(q,\upsilon)$-regular. Similarly, we may suppose all pairs $(A_i,A_j)$ with $ \lfloor r/2 \rfloor < i <  j \leq r$ are lower-$(q,\upsilon)$-regular.

Consider a bijection $\phi:V(H) \rightarrow [r]$. Let $m_1$ denote the number of edges $(v,w)$ of $H$ for which $\phi(v),\phi(w) \leq   \lfloor r/2 \rfloor $ or $\phi(v),\phi(w) >  \lfloor r/2 \rfloor$ and $m_2$ denote the number of edges $(v,w)$ of $H$ for which $\phi(v) \leq   \lfloor r/2 \rfloor < \phi(w)$, so $m=m_1+m_2$. By Lemma \ref{countlemma}, the number of copies of $H$ where $v$ maps to $A_{\phi(v)}$ for each vertex $v \in H$ is at least

\begin{align}
\label{eqn:1}
\begin{split}
\left(q^{m_1}d(A,B)^{m_2}-m\upsilon\right)\prod_{i=1}^r |A_i| & \geq   \left((1-\alpha)^{m_1}(1+\alpha)^{m_2}p^m-m\upsilon\right)\prod_{i=1}^r |A_i| \\ & \geq  \left((1-\alpha m_1)(1+\alpha m_2)p^m-m\upsilon\right)\prod_{i=1}^r |A_i| \\ & =  \left((1-\alpha m_1 + \alpha m_2- \alpha^2 m_1m_2)p^m-m\upsilon\right)\prod_{i=1}^r |A_i| \\ & \geq 
 \left((1- \alpha m_1 + \alpha m_2- \alpha m/64r )p^m-m\upsilon\right)\prod_{i=1}^r |A_i|, \end{split}
\end{align}
where in the last inequality we used $m_1m_2 \leq m^2/4$, which follows from $m_1+m_2=m$ and the AM--GM inequality, and $\alpha \leq 1/16mr$. 

We average this lower bound over all choices of $\phi$. Each edge $(v,w)$ maps to a pair $(i,j)$ with $i<j$ and the probability this pair satisfies $i \leq \lfloor r/2 \rfloor < j$ is $\lfloor r/2 \rfloor \lceil r/2 \rceil /{r \choose 2} \geq \frac{1}{4}(r^2-1)/{r \choose 2}=\frac{1}{2}\left(1+\frac{1}{r}\right)$. Thus, by linearity of expectation, $\mathbb{E}[m_2-m_1] \geq m/r$. Hence, the average value of (\ref{eqn:1}) is at least 
$$ \left(\left(1+\alpha\mathbb{E}[m_2-m_1]-\alpha m/64r\right)p^m-m\upsilon\right)\prod_{i=1}^r |A_i|$$ 
which is at least 
$$\left(\left(1+\alpha m/r-\alpha m/64r\right)p^m-m\upsilon\right)\prod_{i=1}^r |A_i| \geq 
\left(\left(1+\frac{63}{64}\alpha m/r\right)p^m-m\upsilon\right)\prod_{i=1}^r |A_i|.$$
This in turn is equal to $$p^m\prod_{i=1}^r |A_i| + \frac{47}{64r}\alpha mp^m \prod_{i=1}^r |A_i| > p^m\prod_{i=1}^r |A_i| + \frac{1}{4r}\alpha mp^m r^{-r} (|A|/n)^r n^r \geq  p^m\prod_{i=1}^r |A_i| +\delta n^r,$$
contradicting the assumption that $G$ satisfies ${\cal Q}_{H,p}(\delta)$ and completing the proof in this case. The case $d(A,B) \leq  (1-\alpha)p$ follows similarly.
\end{proof}

A single vertex subset $U$ of a graph is called \emph{$\epsilon$-regular} if, for each pair of subsets $A',B' \subset U$, 
we have $\left|e(A',B')-d(U)|A'||B'|\right| \leq \epsilon|U|^2$. In~\cite{CF12}, the first two authors proved the following lemma with $\delta^{-1}$ having a double-exponential dependence on $\epsilon^{-1}$. 

\begin{lemma}\label{singlereg}
For each $\epsilon>0$, there is $\delta>0$ such that every graph on $n$ vertices has an $\epsilon$-regular subset on at least $\delta n$ vertices. 
\end{lemma}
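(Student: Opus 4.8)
The plan is to run a density‑increment argument on nested vertex subsets. Maintain a set $U$, starting from $U=V(G)$; if $U$ is $\epsilon$-regular we stop, and otherwise there are $A',B'\subseteq U$ with $\bigl|e(A',B')-d(U)|A'||B'|\bigr|>\epsilon|U|^2$. The first step is to convert this witness for an irregular \emph{pair} into a witness for an irregular \emph{single subset}: applying the inclusion–exclusion identity for $e(\cdot,\cdot)$ used in the proof of Lemma~\ref{lem:counting} to $A'\cap B',\,A'\setminus B',\,B'\setminus A'$, the deviation of $e(A',B')$ is a bounded combination of the deviations of $e(S)$ from $d(U)\binom{|S|}{2}$ over $O(1)$ sets $S\subseteq U$, so some such $S$ has $\bigl|e(S)-d(U)\binom{|S|}{2}\bigr|=\Omega(\epsilon)|U|^2$, and hence $|S|=\Omega(\sqrt{\epsilon})|U|$ and $|d(S)-d(U)|=\Omega(\epsilon)$. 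Feeding $G[U]$ into Lemma~\ref{lem:EGPS} then lets me take the witness of size exactly $|U|/2$ at the cost of only a constant factor in the deviation. Thus at each stage I pass from $U$ to $U'\subseteq U$ with $|U'|=|U|/2$ and $|d(U')-d(U)|\geq\eta$ for some $\eta=\mathrm{poly}(\epsilon)$.

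The crucial — and, I expect, hardest — point is to bound the number of iterations by a quantity $T=T(\epsilon)$ independent of $n$: once this is done, halving at each of the $\leq T$ steps yields an $\epsilon$-regular set of size $\geq 2^{-T}n$, as required. This needs a bounded monovariant that strictly increases each step. The bare density $d(U)$ is not obviously monotone, because irregularity tells us the witness density moves away from $d(U)$ but not in which direction (for $K_{n/2,n/2}$ it must drop, for a disjoint union of two cliques it must rise), so the walk of $d(U)$ could in principle oscillate. I see two routes around this. The first is to show, at the expense of worse constants, that one can always choose the witness so that its density moves in a prescribed direction — using the weighted‑average identity $d(U)=d(S)t^{2}+2d(S,\bar S)t(1-t)+d(\bar S)(1-t)^{2}$ together with the earlier reduction to argue that if \emph{no} large subset has density appreciably on the chosen side of $d(U)$ then the irregularity must manifest itself through a subset on the other side — thereby turning a quantity such as $d(U)$ or $|d(U)-\tfrac12|$ (with near‑empty and near‑complete sets as the two terminal regular states) into a genuine monovariant bounded in $[0,1]$, giving $T=\mathrm{poly}(1/\epsilon)$. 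The second route is to replace the bare density by the energy $q(\mathcal P)=\sum_{P,P'}\frac{|P||P'|}{n^{2}}d(P,P')^{2}$ of the partition $\mathcal P$ of $V(G)$ formed by the nested shells $U_{0}\setminus U_{1},\dots,U_{i-1}\setminus U_{i},U_{i}$ produced so far, using that refining $U_{i}$ by a density‑deviating subset increases $q$ by $\Omega\!\bigl(\eta^{2}(|U_{i}|/n)^{2}\bigr)$; here the geometric shrinkage of $|U_{i}|$ competes with the fixed energy budget, and it is precisely in taming this interplay that a (potentially double‑exponential) dependence of $\delta^{-1}$ on $\epsilon^{-1}$ appears.

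The remaining ingredients are routine: Lemma~\ref{Azuma} can be used to certify that a candidate set is $\epsilon$-regular by sampling, or to replace a witness pair by one with sets of equal size with controlled loss; and the final set in the chain, by construction either $\epsilon$-regular or having a large density‑deviating subset, is $\epsilon$-regular once the process terminates. I would organise the write‑up as: (1) the inclusion–exclusion reduction from pairs to subsets; (2) the directional/monovariant lemma or the energy bookkeeping, which is the heart of the argument; (3) the iteration, with size‑halving via Lemma~\ref{lem:EGPS}; (4) assembling the bound on $\delta$.
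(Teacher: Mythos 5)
The paper does not prove Lemma~\ref{singlereg}; it is cited from Conlon and Fox~\cite{CF12}, so there is no in-paper argument to compare against, and your proposal has to be judged on its own. The preliminary steps (converting a pair witness into a single-set witness via the inclusion--exclusion identity, halving via Lemma~\ref{lem:EGPS}, and recursing) are sound, but you never actually prove the step on which everything hinges: a bound $T(\epsilon)$ on the number of iterations. Your Route~1 asserts that one can steer the witness density in a prescribed direction, or at least make $|d(U)-\tfrac12|$ increase; both claims are false. Take $U=V_1\cup V_2$ with $|V_1|=|V_2|$, $G[V_1]$ and $G[V_2]$ random of density $0.4$, and a random bipartite graph of density $0.2$ between them. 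Then $d(U)=0.3$ and the pair $(V_1,V_2)$ witnesses irregularity, yet a direct computation shows that \emph{every} $S\subseteq U$ with $|S|=|U|/2$ has $d(S)\in[0.3,0.4]$: the density can only increase, and $|d(S)-\tfrac12|\le|d(U)-\tfrac12|$ for every half-size $S$. So neither $d$ nor $|d-\tfrac12|$ is a usable monovariant, and the inclusion--exclusion reduction does lose control of the sign, exactly as one should fear. (A $T=\mathrm{poly}(1/\epsilon)$ bound, if true, would also dramatically improve the doubly exponential bound of~\cite{CF12}, which ought to give you pause.)

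Route~2 you yourself observe does not close: refining $U_i$ into $\{U_{i+1},U_i\setminus U_{i+1}\}$ raises the nested-shell energy by only $\Omega(\eta^2 4^{-i})$, these increments sum to a constant, and so the $O(1)$ energy budget never forces the process to stop. Writing that ``it is precisely in taming this interplay that a double-exponential dependence appears'' is not an argument; it is the theorem you are trying to prove, restated as an aspiration. What is genuinely needed is a self-improving quantity whose per-step gain does not decay geometrically with the shell index, and the proposal does not supply one. As it stands this is an outline with a hole at its load-bearing joint, not a proof.
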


We are now ready to prove Theorem~\ref{graphs-graphs}. That is, we will show that if $G$ satisfies
${\cal P}^*_{H,p}(\delta)$ with $\delta = c'(p,r) \epsilon^{c(p,r)}$, then it also satisfies ${\cal P}^*_{2,p}(\epsilon)$, where $r$ is the number of vertices in $H$.

\vspace{3mm}

\noindent
{\bf Proof of Theorem \ref{graphs-graphs}:} Suppose for contradiction that $G$ is a graph on $n$ vertices which 
satisfies property ${\cal P}^*_{H,p}(\delta)$, but does not satisfy property ${\cal P}^*_{2,p}(\epsilon)$, where $\delta=c'(p,r)\epsilon^{c(p,r)}$ with $c(p,r)=10r^4p^{1-m}$ and $c'(p,r) > 0$ will be chosen sufficiently small depending only on $p$ and $r$. By Lemma \ref{easy}, $G$ also has property ${\cal Q}_{H,p}((2^r-1)\delta)$. As $G$ does not satisfy property ${\cal P}^*_{2,p}(\epsilon)$, there is $S \subseteq V(G)$  with $\left|2e(S)-p|S|^2\right|> \epsilon n^2$. Averaging over all equitable partitions $S=A \cup B$, we obtain that there are disjoint vertex subsets $A_0$ and $B_0$ with $|A_0|=|B_0|$ and $|e(A_0,B_0)-p|A_0||B_0||> \epsilon n^2/4$. We have $|A_0|^2=|A_0||B_0|>\epsilon n^2/4$, so that $|A_0| \geq \epsilon^{1/2}n/2$ and also $|d(A_0,B_0)-p|>\epsilon n^2/(4|A_0||B_0|) \geq \epsilon$. 

We repeatedly apply Lemma \ref{keylem}, starting with $A_0$ and $B_0$, until we arrive at a pair of subsets $(A',B')$ with $|A'|=|B'|$ and $d(A',B') = (1 + \alpha)p$ or $d(A',B') = (1-\alpha)p$, for some $\alpha \geq \frac{1}{16mr}$. As we will see below, the sets defined during this process will always be sufficiently large that we may continue applying Lemma~\ref{keylem} until this happens. Having already found disjoint sets $A_i$ and $B_i$ with $\left |\frac{d(A_i,B_i)}{p} - 1 \right | : = \alpha_i$ (so, in particular, $\alpha_0 > \epsilon/p$), we apply Lemma \ref{keylem} to obtain disjoint sets $A_i'$ and $B_i'$ with $|A_i'|=|B_i'|$ and $|\frac{d(A_i',B_i')}{p}-1| \geq (1+\beta_i)\alpha_i$, where $\beta_i = \frac{p^{m-1}}{4r^3}\frac{|A_i|}{|A_i'|}$. Let $A_{i+1}$ and $B_{i+1}$ be disjoint sets with $|A_{i+1}|$ as large as possible such that $|A_{i+1}|=|B_{i+1}| \geq |A_i'|$ and $\alpha_{i+1}:=\left |\frac{d(A_{i+1},B_{i+1})}{p}-1\right |  \geq \left|\frac{d(A_i',B_i')}{p}-1 \right |$ is as large as possible (such sets exist because $A_i'$ and $B_i'$ have the desired properties).

Let $a_i=|A_{i-1}|/|A_i|$. Note that $a_i \geq 1$, as otherwise we would have taken $A_i$ and $B_i$ for $A_{i-1}$ and $B_{i-1}$, respectively. Let $\gamma=\frac{p^{m-1}}{4r^3}$, so that $\alpha_{i+1} \geq (1+\gamma a_{i+1})\alpha_i$. Let $i_0$ be the last $i$ for which we obtain an $A_i$ and $B_i$, so that either $\alpha_{i_0}  \geq \frac{1}{16mr}$ or the sets $A'=A_{i_0}$ and $B'=B_{i_0}$ are too small for the hypotheses of Lemma \ref{keylem} to hold. We have 
$$\alpha_{i_0}=\alpha_0\prod_{j=1}^{i_0} \frac{\alpha_j}{\alpha_{j-1}} \geq \alpha_0 \prod_{j=1}^{i_0} (1+\gamma a_j).$$
As also $\alpha_{i_0}  \leq 1/p$ and $\alpha_0 \geq \epsilon/p$, we get $\prod_{j=1}^{i_0} (1+\gamma a_j) \leq \frac{1}{\epsilon}$. Given this inequality, the maximum of $\prod_{j=1}^{i_0} a_j$ is attained when all the $a_j$ are equal; call this equal value $a$. Thus, we are interested in the maximum of $a^{i_0}$ given that $(1+\gamma a)^{i_0} \leq \frac{1}{\epsilon}$. This is equivalent to maximizing $\frac{\ln a}{\ln (1+\gamma a)}$. Let $x=\gamma a$. For $x \leq 1.5$, we have $\ln (1+x) \geq x/2$ and so $\frac{\ln a}{\ln (1+\gamma a)} \leq 2\frac{\ln a}{x} = \frac{2}{\gamma}\frac{\ln a}{a} \leq \frac{2}{e\gamma}$. For $x > 1.5$, we have $\frac{\ln a}{\ln (1+\gamma a)}\leq \frac{\ln a}{\ln (\gamma a)}=1+\frac{\ln \gamma^{-1}}{\ln x} \leq 3\ln \gamma^{-1}$. As we may assume $r \geq 3$ and $m \geq 2$, $\gamma$ is small enough that the first bound is larger, so we have $$|A'|=|A_0|/\prod_{j=1}^{i_0} a_j \geq \epsilon^{2/(e\gamma)}|A_0| \geq \frac{1}{2}\epsilon^{1/2 + 2/(e\gamma)}n.$$ 
By the choice of $c(p,r)$, $|A'|=|A_{i_0}|$ is large enough that Lemma \ref{keylem} still applies at the next step if $\alpha:=\alpha_{i_0} \leq \frac{1}{16mr}$. Therefore, we must have $\alpha >  \frac{1}{16mr}$.

The sets $A'$ and $B'$ have order at least $\epsilon^{5r^3p^{1-m}}n$. Suppose  that $d(A',B') = (1+\alpha)p$ with $\alpha > \frac{1}{16mr}$. The other case when $d(A',B') < (1 - \alpha)p$ is handled similarly. Let $C$ be the subset of $A'$ with at least $(1+\alpha/2)p|B'|$ neighbors in $B'$, so $|C| \geq |A'|\alpha p/2$. Let $\eta:=10^{-5}\alpha^2p^{2m}m^{-2}$ and $\kappa=2m\eta p^{1-m}$. Apply Lemma \ref{singlereg} to the subgraph of $G$ induced by $C$. We get a subset $C_1 \subset C$  which is $\eta$-regular with $|C_1| \geq \tau|C|$, where $\tau$ only depends on $p$ and $r$. If 
$d(C_1)<p-\kappa$, then by the counting lemma, Lemma \ref{countlemma}, the number of homomorphisms from $H$ to $C_1$ is at most  $$(p-\kappa)^{m}|C_1|^r+m \eta|C_1|^r \leq (1-\frac{\kappa}{p})p^m|C_1|^r +m \eta|C|^r = p^m|C_1|^r-m\eta|C_1|^r \leq 
p^m|C_1|^r - a(p,r) \epsilon^{8r^4p^{1-m}}n^r$$  
for an appropriate constant $a(p,r)$. This contradicts ${\cal P}^*_{H,p}(\delta)$ when $c'(p,r)$ is sufficiently small. Hence, $d(C_1) \geq p-\kappa$. Let $D$ be the subset of $B'$ with at least $(1+\alpha/4)p|C_1|$ neighbors in $C_1$, so $|D| \geq |B'|\alpha p/4$. Let $D_1 \subset D$ be  a subset of order $\frac{\alpha p^{m-1}}{40r}|C_1|$. 

The rest of the proof is in showing that $C_1 \cup D_1$ violates the property $\mathcal{P}^*_{H,p}(\delta)$ as this subset contains too many labeled copies of $H$. Indeed, by Lemma \ref{countlemma}, the number of homomorphisms from $H$ to $C_1$ is at least  $(p-\kappa)^m|C_1|^r-m\eta|C_1|^r  \geq p^m|C_1|^r-(mp^{m-1}\kappa+m\eta)|C_1|^r$. The number of homomorphisms from $H$ to $C_1$ which fail to be copies of $H$ is at most the number of non-injective mappings from $H$ to $C_1$, which is less than $r^2|C_1|^{r-1}$. Thus, we get at least $p^m|C_1|^r-(mp^{m-1}\kappa+m\eta+r^2|C_1|^{-1})|C_1|^r$ copies of $H$ in $C_1$. 

We next give a lower bound on the number of copies of $H$ in $C_1 \cup D_1$ with the copy of vertex $i$ in $D_1$ and the remaining $r-1$ vertices in $C_1$. Suppose vertex $i$ has degree $t$ in $H$. We have $t\geq 1$ as $H$ has no isolated vertices. Fix a vertex $v \in D_1$ to map $i$ to, and let $C_2$ be the neighborhood of $i$ in $C_1$, so $|C_2| \geq (1+\alpha/4)p|C_1|$.  Since $|C_2| \geq p|C_1|$ and $C_1$ is $\eta$-regular, we get that each of the pairs  $(C_1,C_1)$, $(C_1,C_2)$, $(C_2,C_2)$ is $2 p^{-2}\eta$-regular. The number of homomorphisms of $H$ with the copy of vertex $i$ mapping to $v$ and the remaining vertices of $H$ mapping to $C_1$ (so each of the $t$ neighbors of $i$ has to map to $C_2$) is, by Lemma \ref{countlemma}, at least 
\begin{eqnarray*}\left((p-\kappa)^{m-t}-(m-t)2 p^{-2}\eta \right)|C_1|^{r-t-1}|C_2|^t & \geq & \left(p^{m-t}-(m-t)\kappa p^{m-t-1}-(m-t)2 p^{-2}\eta\right)|C_1|^{r-t-1}|C_2|^t \\ & \geq & (p^{m-t}-\alpha p^{m-t}/8) |C_1|^{r-t-1}|C_2|^t  \\ & \geq & 
(p^{m-1}-\alpha p^{m-1}/8) |C_1|^{r-2}|C_2|^1
 \\ & \geq & (p^{m-1}-\alpha p^{m-1}/8)(1+\alpha/4)p|C_1|^{r-1} \\ & \geq & \left(1+\alpha/10\right)p^{m}|C_1|^{r-1}.\end{eqnarray*}
The number of mappings from $H$ to $C_1 \cup D_1$ with vertex $i$ going to $v$ and the other $r-1$ vertices going to $C_1$ which are not one-to-one is at most $r^2|C_1|^{r-2}$. As $|C_1| \geq 20 r^2 \alpha^{-1}p^{-m}$, then we get at least  $ \left(1+\alpha/20\right)p^{m}|C_1|^{r-1}$ labeled copies of $H$ with vertex $i$ mapping to $v$ and the remaining vertices mapping to $C_1$. Summing over all choices of $v$ in $D_1$, we get that there are at least 
$ \left(1+\alpha/20\right)p^{m}|C_1|^{r-1}|D_1|$ labeled copies of $H$ with vertex $i$ mapping to $D_1$ and the remaining $r-1$ vertices mapping to $C_1$. Finally, summing over all $r$ choices of $i$, we get at least $r \left(1+\alpha/20\right)p^{m}|C_1|^{r-1}|D_1|$ labeled copies of $H$ with one vertex mapping to $D_1$ and the remaining $r-1$ vertices mapping to $C_1$. 

Finally, the number of possible mappings from $H$ to $C_1 \cup D_1$ with at least two vertices in $D_1$ is at most $\sum_{j \geq 2}{r \choose j}|D_1|^j|C_1|^{r-j} \leq r^2|D_1|^2|C_1|^{r-2}$. Putting the bounds together, we get that the number of labeled copies of $H$ in $C_1 \cup D_1$ is $p^m|C_1 \cup D_1|^r$ (this is the sum of the contributions of the main terms) plus at least 
$$r\frac{\alpha}{20}p^{m}|C_1|^{r-1}|D_1|-(mp^{m-1}\kappa+m\eta+r^2|C_1|^{-1})|C_1|^r-r^2|D_1|^2|C_1|^{r-2},  
$$
which, substituting in $|D_1|=\frac{\alpha p^{m}}{40r}|C_1|$, is equal to
$$\frac{\alpha^2}{1600}p^{2m}|C_1|^r-(mp^{m-1}\kappa+m\eta+r^2|C_1|^{-1})|C_1|^r.$$
Using $|C_1| \geq r^2 \eta^{-1}$ (recall that $\eta = 10^{-5}\alpha^2p^{2m}m^{-2}$) and substituting in $\kappa=2m\eta p^{1-m}$, we get that this is at least 
$$\frac{\alpha^2}{1600}p^{2m}|C_1|^r-4m^2\eta|C_1|^r \geq 10^{-5}\alpha^2p^{2m}|C_1|^r \geq \delta n^r,$$
provided $c'(p,r)$ is chosen sufficiently small. This completes the proof. 
\qed 

\section{Concluding remarks} \label{sec:conclude}

It is plausible that Theorem~\ref{complete-graphs} can be extended to all $H$, that is, that a graph $G$ which satisfies ${\cal P}^*_{H,p}(\delta)$ also satisfies ${\cal P}^*_{2,p}(\epsilon)$ with $\epsilon \leq c(p,r) \delta$, where $r$ is the number of vertices in $H$. However, it seems that new ideas will be needed to prove this in full generality. It would already be interesting to obtain a linear dependence in the special case $H = C_4$.

One might also ask about the quantitative aspects of other quasirandom equivalences. For example, we know that for any $\epsilon > 0$, there exists $\delta > 0$ such that if a graph $G$ has density $p$ and satisfies $\mathcal{P}_{C_4,p}(\delta)$, then it also satisfies $\mathcal{P}_{2, p}^*(\epsilon)$. The forcing conjecture, which was already mentioned in the introduction, states that a similar result should hold when $C_4$ is replaced by any bipartite graph $H$ which contains a
cycle. Somewhat tentatively, we are willing to venture that the following stronger quantitative version of this conjecture holds.

\begin{conjecture}\label{depend}
Let $H$ be a fixed bipartite graph of girth $g$ and $0 < p < 1$ a fixed constant. For each $\epsilon>0$, there is $\delta=\Omega(\epsilon^g)$ such that any graph $G$ with density $p$ which satisfies $\mathcal{P}_{H,p}(\delta)$ also satisfies $\mathcal{P}_{2,p}^*(\epsilon)$.
\end{conjecture}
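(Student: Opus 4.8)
\emph{Even cycles.} The plan would be to first settle the case where $H=C_g$ is an even cycle, where the conjecture reduces to a classical spectral estimate, and only then attempt to bootstrap to a general bipartite $H$ of girth $g$. Write $B=A/n$ for the normalised adjacency matrix of $G$, with eigenvalues $\mu_1\geq\cdots\geq\mu_n$; then the number of labelled copies of $C_g$ in $G$ is $\operatorname{tr}(A^g)-O(n^{g-1})=n^g\sum_i\mu_i^g-O(n^{g-1})$, since closed walks of length $g$ that fail to be cycles number only $O(n^{g-1})$. As $G$ has density $p$ we have $\sum_i\mu_i^2=p-o(1)$ and, by the Rayleigh quotient with the all-ones vector, $\mu_1\geq p-o(1)$. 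If $G$ failed $\mathcal{P}^*_{2,p}(\epsilon)$ then, by the quantitative form of the Chung--Graham--Wilson equivalences, either $\mu_1\geq p+\Omega(\epsilon^2)$ or $\max_{i\geq2}|\mu_i|\geq\Omega(\epsilon)$; in the first case $\sum_i\mu_i^g\geq\mu_1^g\geq p^g+\Omega(\epsilon^2)-o(1)$, while in the second, since $g$ is even and hence every $\mu_i^g\geq0$, $\sum_i\mu_i^g\geq\mu_1^g+(\max_{i\geq2}|\mu_i|)^g\geq p^g+\Omega(\epsilon^g)-o(1)$. Either way the number of labelled $C_g$'s would exceed $p^g n^g+\Omega(\epsilon^g)n^g$ for $n$ large, contradicting $\mathcal{P}_{C_g,p}(\delta)$ once $\delta=O(\epsilon^g)$ with a small enough constant; the construction at the end shows this exponent is sharp.

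\emph{General $H$.} For the general case the guiding heuristic is that an $\epsilon$-discrepancy of $G$ is, up to polynomial loss, a rank-one perturbation $p\mapsto p+\epsilon\,w\otimes w$ of the constant graphon with $w$ balanced, and that the leading effect of such a perturbation on $t(H,\cdot)$ is carried by a shortest cycle of $H$: when one expands $t(H,p+\epsilon\,w\otimes w)$, every term indexed by a subgraph of $H$ that is a forest, or that meets some vertex in an odd number of perturbed edges, vanishes because $\int w=0$, so the first surviving term comes from a cycle and is $\Theta(\epsilon^g)$, with a positive coefficient as forcing demands. To make this rigorous I would run the known derivation of $t(H,G)\geq p^{e(H)}$ --- the iterated Jensen/Cauchy--Schwarz argument available for the $H$ for which Sidorenko's conjecture is established (even cycles, complete bipartite graphs, bipartite graphs with a vertex joined to the whole other part, graphs admitting a suitable tree or reflection decomposition, and so on) --- but keep the defect terms instead of discarding them, isolate the defect produced by closing up a shortest cycle of $H$, and bound it below by $\Omega(\epsilon^g)\prod_i|V_i|$, the remaining defects being nonnegative. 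This yields $t(H,G)\geq p^{e(H)}+\Omega(\epsilon^g)$, so a density-$p$ graph satisfying $\mathcal{P}_{H,p}(\delta)$ with $\delta=\Omega(\epsilon^g)$ would satisfy $\mathcal{P}^*_{2,p}(\epsilon)$. For weakly norming $H$ there is a cleaner variant using that $t(H,\cdot)^{1/e(H)}$ is a norm, comparing its value on $A-pJ$ with that of a rank-one spectral truncation of $A-pJ$.

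\emph{The main obstacle.} The hard part is twofold, and the second point is the real difficulty. First, forcing itself is open for general bipartite $H$ containing a cycle, so the scheme above is viable only for the (expanding) list of $H$ whose Sidorenko inequality has an explicit, defect-friendly proof; a fully general statement would have to establish forcing along the way. Second, even granted such a proof, pinning down the \emph{exact} exponent $g$ --- rather than $e(H)$, or a tower-type bound if one routes through the regularity lemma as in the original Simonovits--S\'os argument --- is delicate: one must show a shortest cycle of $H$ detects the edge discrepancy with a defect of order exactly $\epsilon^g$, which means organising the Sidorenko argument carefully around that cycle and controlling the interaction of its defect with the others. Finally, to see the exponent cannot be improved, perturb $G(n,p)$ by a single balanced bump: split $V(G)$ into halves $V_{+},V_{-}$, with edge density $p+\eta$ inside each half and $p-\eta$ across, and $\eta=\Theta(\epsilon)$. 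Then $G$ fails $\mathcal{P}^*_{2,p}(\epsilon)$ by a one-line computation, while expanding $t(H,\cdot)$ around the block graphon $p+\eta\,w\otimes w$ with $w=\mathbf 1_{V_{+}}-\mathbf 1_{V_{-}}$ shows that all contributions of order $\epsilon,\epsilon^2,\dots,\epsilon^{g-1}$ cancel, the first surviving term, of order $\epsilon^g$, being carried by a shortest cycle of $H$; hence $\delta=\Omega(\epsilon^g)$ is best possible.
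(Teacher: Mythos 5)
This statement is a conjecture which the paper explicitly leaves open --- the authors even warn that it ``may be easier to disprove than the conjecture itself'' --- so there is no paper proof to compare against, and your response is appropriately framed as a plan together with a tightness example rather than a proof. Your account closely tracks the paper's concluding discussion: the paper likewise remarks (without spelling out the spectral argument) that the conjecture is easy for even cycles, and your lower-bound construction --- two balanced blocks with densities $p\pm\eta$, all terms of order $\epsilon^i$ for $i<g$ in the expansion of $t_H$ vanishing by parity because every subgraph with fewer than $g$ edges is a forest --- is precisely the paper's generalized-random-graph example, just written multiplicatively rather than via the $\pm 1$ random variables $X_e$. The one substantive difference is the bootstrap for general $H$: the paper derives the conjecture for a specific class of girth-$4$ graphs from the $C_4$ case using the functional inequality $t_H(G)\geq t_{C_4}(G)^{m/4}$, whereas you propose to reopen the iterated Cauchy--Schwarz proof of Sidorenko's inequality and track the defect carried by a shortest cycle of $H$; the latter is more ambitious, potentially reaching girth greater than $4$, but the bookkeeping is not carried out and, as you concede, may be hard. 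One small caveat in the even-cycle sketch: the dichotomy you invoke (failure of $\mathcal{P}^*_{2,p}(\epsilon)$ forces $\mu_1\geq p+\Omega(\epsilon^2)$ or $\max_{i\geq 2}|\mu_i|\geq\Omega(\epsilon)$) is correct in spirit but not an off-the-shelf statement; it needs a short argument from the expander mixing lemma and the trace identities. You correctly identify the two genuine obstacles --- the forcing conjecture is open, and even where it is known the exponent $g$ is delicate to isolate --- which are exactly the reasons the paper records this as a conjecture rather than a theorem.
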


To see that the girth dependence would be tight, consider a random graph with $n$ vertices whose vertex set is partitioned into two parts $V_1$ and $V_2$, each of order $n/2$, with density $p-\epsilon$ inside parts and density $p+\epsilon$ between parts. Equivalently, consider the generalized random graph $G$ on {\it two} vertices, where loops have weight $p-\epsilon$ and the edge between the two vertices has weight $p+\epsilon$. By picking one vertex from $G$, we see that it does not satisfy $\mathcal{P}_{2,p}^*(\epsilon/2)$.

Consider now a random mapping of the vertices of $H$ to the two vertices of $G$. For an edge $e$ of $H$, let $X_e=-1$ if both vertices of $e$ map to the same vertex of $G$ and $X_e=1$ if the vertices of $e$ map to different vertices of $G$. The homomorphism density of $H$ in $G$ is then
$$\mathbb{E}[\prod_{e}(p+X_e\epsilon)].$$
Suppose that $e_1,\ldots,e_k$ are edges of $H$ with $k < g$. Since these edges form a forest, it follows that
$$\mathbb{E}[X_{e_1}X_{e_2}\ldots X_{e_k}]=\mathbb{E}[X_{e_1}]\cdots\mathbb{E}[X_{e_k}].$$ 
In particular, since $\mathbb{E}[X_{e}]=0$,
this implies that the coefficient of $\epsilon^i$ is zero for $i=1,\ldots,g-1$ and, therefore, $G$ satisfies $\mathcal{P}_{H,p}(\delta)$ with $\delta = O(\epsilon^g)$.

It is not hard to verify that Conjecture~\ref{depend} holds when $H$ is an even cycle. Combining this observation with other known results allows us to prove the conjecture for some reasonably broad classes of graphs. For example, Theorem 1.1 in~\cite{CFS10} implies that if $H$ is a bipartite graph with $m$ edges which has two vertices in one part complete to the other part and minimum degree at least two in the first part, then the homomorphism density $t_H(G)$ satisfies $t_H(G) \geq t_{C_4}(G)^{m/4}$. Therefore, if $t_H(G) \leq p^m(1+ \epsilon^4)$, we have $t_{C_4}(G) \leq p^4 (1+ \epsilon^4)^{4/m} \leq p^4 (1 + \frac{8}{m} \epsilon^4)$ and the required result for $H$ follows from the $C_4$ case. As with the forcing conjecture, similar arguments can likely prove Conjecture~\ref{depend} for many of the graphs for which Sidorenko's conjecture is known to hold. On the other hand, our second-order Sidorenko conjecture may be easier to disprove than the conjecture itself.

\end{document}